\numberwithin{equation}{section}
\theoremstyle{plain}
\newtheorem{thm}{Theorem}[section]
\newtheorem{prop}[thm]{Proposition}
\newtheorem{lem}[thm]{Lemma}
\theoremstyle{definition}
\newcommand{\R}{\mathbb{R}}
\newcommand{\Z}{\mathbb{Z}}
\newcommand{\calF}{\mathcal{F}}
\newcommand{\calI}{\mathcal{I}}
\newcommand{\calL}{\mathcal{L}}
\newcommand{\calQ}{\mathcal{Q}}
\newcommand{\calS}{\mathcal{S}}
\begin{document}

\title[Pseudo-differential operators
with symbols in $\alpha$-modulation spaces]
{Trace ideals for pseudo-differential operators
and their commutators with symbols in $\alpha$-modulation spaces}
\author{Masaharu Kobayashi \and Mitsuru Sugimoto \and Naohito Tomita}
\date{}

\address{Masaharu Kobayashi \\
Department of Mathematics \\
Tokyo University of Science \\
Kagurazaka 1-3, Shinjuku-ku, Tokyo 162-8601, Japan}
\email{kobayashi@jan.rikadai.jp }

\address{Mitsuru Sugimoto \\
Department of Mathematics \\
Graduate School of Science \\
Osaka University \\
Toyonaka, Osaka 560-0043, Japan}
\email{sugimoto@math.sci.osaka-u.ac.jp}

\address{Naohito Tomita \\
Department of Mathematics \\
Graduate School of Science \\
Osaka University \\
Toyonaka, Osaka 560-0043, Japan}
\email{tomita@gaia.math.wani.osaka-u.ac.jp}

\keywords{$\alpha$-modulation spaces, Besov spaces,
trace class, pseudo-differential operators, commutators}

\subjclass[2000]{42B35, 47B10, 47G30}

\begin{abstract}
The fact that symbols in
the modulation space $M^{1,1}$ generate pseudo-differential operators
of the trace class was first mentioned by Feichtinger and the proof was given
by Gr\"ochenig \cite{Grochenig-1}.
In this paper, we show that the same is true if we replace $M^{1,1}$
by more general $\alpha$-modulation spaces which
include modulation spaces ($\alpha=0$)
and Besov spaces ($\alpha=1$) as special cases.
The result with $\alpha=0$ corresponds to that of Gr\"ochenig,
and the one with $\alpha=1$ is a new result which states
the trace property of the operators with symbols in the Besov space.
As an application,
we also discuss the trace property of the commutator $[\sigma(X,D),a]$,
where $a(x)$ is a Lipschitz function and
$\sigma$ belongs to an $\alpha$-modulation space.
\end{abstract}
\maketitle

\section{Introduction}\label{section1}
In our previous paper \cite{K-S-T},
we have discussed the $L^2$-boundedness of
pseudo-differential operators with symbols
in the $\alpha$-modulation spaces
$M^{p,q}_{s,\alpha}$ ($0\leq\alpha\leq1$), a parameterized family of
function spaces, which include the modulation spaces $M^{p,q}_s$ ($\alpha=0$)
and the Besov spaces $B^{p,q}_s$ ($\alpha=1$) as special cases.
More precisely, the symbol
$\sigma\in
M^{(\infty,\infty),(1,1)}_{{(\alpha n/2,\alpha n/2)},{(\alpha,\alpha)}}$,
which means
$\sigma(x,\xi)$ belongs to $M^{\infty,1}_{\alpha n/2,\alpha}$ in both $x$
and $\xi$, generates the $L^2(\R^n)$-bounded pseudo-differential operator.
Especially in the case $\alpha=0$ (resp. $\alpha=1$), this result
corresponds to that of Sj\"ostrand \cite{Sjostrand}
(resp. Sugimoto \cite{Sugimoto}),
which says the $L^2$-boundedness of the operators with symbols in
the modulation space $M^{\infty,1}$ (resp. Besov space
$B^{(\infty,\infty),(1,1)}_{(n/2,n/2)}$).
\par
On the other hand, it is known that symbols in
the modulation space $M^{1,1}$ generate pseudo-differential operators
of the trace class.
This fact was first mentioned by Feichtinger and the proof was given
by Gr\"ochenig \cite{Grochenig-1}.
As a corollary, we get the result by Daubechies \cite{Daubechies}
which says that $\sigma\in L^2_s(\R^{2n})\cap H^s(\R^{2n})$ 
has the same property
\[
\|\sigma(X,D)\|_{\calI_1}
\le C\left(
\|\langle x;\xi\rangle^{s}\sigma(x,\xi)\|_{L^2(\R^{2n})}
+\|\langle x;\xi\rangle^{s}\widehat{\sigma}(x,\xi)\|_{L^2(\R^{2n})}
\right)
\]
for $s>2n$,
where 
$\|\cdot\|_{\calI_1}$ is the trace norm,
$\R^{2n}=\R^n_x\times\R^n_\xi$ and
$\langle x;\xi\rangle=(1+|x|^2+|\xi|^2)^{1/2}$
(see Gr\"ochenig \cite[Corollary 8.38]{Grochenig-2}).
Further developments in this direction can be also seen in 
Cordero-Gr\"ochenig \cite{Cordero-Grochenig},
Fern\'andez-Galbis \cite{Fernandez-Galbis},
Gr\"ochenig-Heil \cite{Grochenig-Heil}, Labate \cite{Labate}
and Toft \cite{Toft-2,Toft-3}.
\par
On account of our $L^2$-boundedness result,
it is natural to expect that the same trace property is
true if we replace $M^{1,1}$ by more general $\alpha$-modulation spaces
$M^{(1,1),(1,1)}_{(\alpha n/2,\alpha n/2),(\alpha,\alpha)}$.
We remark that the notion of $\alpha$-modulation spaces
was introduced by Gr\"obner \cite{Grobner},
and developed by the works of 
Feichtinger-Gr\"obner \cite{Feichtinger-Grobner},
Borup-Nielsen \cite{B-N,B-N-2}
and Fornasier \cite{Fornasier}.
The precise definition of them
will be given later in Section 2.
The following is our main theorem:
\begin{thm}\label{1.1}
Let $0 \le \alpha \le 1$.
Then there exists a constant $C>0$ such that
\[
\|\sigma(X,D)\|_{\calI_1} \le C
\|\sigma\|_{M_{(\alpha n/2,\alpha n/2),(\alpha,\alpha)}^{(1,1),(1,1)}}
\]
for all
$\sigma \in M_{(\alpha n/2,\alpha n/2),(\alpha,\alpha)}^{(1,1),(1,1)}
(\R^n\times\R^n)$.
\end{thm}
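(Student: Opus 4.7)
The plan is to perform a frequency decomposition of $\sigma$ adapted to the $\alpha$-covering of $\R^{2n}$ that defines the norm on the right-hand side, to reduce each localized piece to the standard $M^{1,1}$ setting by unitary modulations, and then to apply Gr\"ochenig's trace-class bound \cite{Grochenig-1}. This parallels the strategy used for the $L^2$-boundedness theorem in \cite{K-S-T} mentioned above, with the $L^2$ input at the end replaced by the $\calI_1$ input.

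Fix a bounded admissible partition of unity $\{\phi_{j,k}\}$ on $\R^n_\eta\times\R^n_\tau$ subordinate to an $\alpha$-cover centered at sample points $(\xi_j^\ast,\eta_k^\ast)$, whose boxes have sizes $r_j\times r_k$ with $r_j=\langle\xi_j^\ast\rangle^\alpha$ and $r_k=\langle\eta_k^\ast\rangle^\alpha$, and write $\sigma=\sum_{j,k}\sigma_{j,k}$ with $\sigma_{j,k}=\calF^{-1}(\phi_{j,k}\widehat{\sigma})$. By the definition of the $\alpha$-modulation norm appearing in the theorem,
\[
\|\sigma\|_{M^{(1,1),(1,1)}_{(\alpha n/2,\alpha n/2),(\alpha,\alpha)}}
\sim\sum_{j,k}\langle\xi_j^\ast\rangle^{\alpha n/2}\langle\eta_k^\ast\rangle^{\alpha n/2}\|\sigma_{j,k}\|_{L^1(\R^{2n})},
\]
so that the theorem reduces to the uniform estimate
\[
\|\sigma_{j,k}(X,D)\|_{\calI_1}
\le C\,r_j^{n/2}\,r_k^{n/2}\,\|\sigma_{j,k}\|_{L^1(\R^{2n})}.
\]
Multiplying $\sigma_{j,k}$ by the unimodular phases $e^{-ix\cdot\xi_j^\ast}$ and $e^{-i\xi\cdot\eta_k^\ast}$ amounts to composing $\sigma_{j,k}(X,D)$ with a multiplication operator and a translation, both unitary on $L^2(\R^n)$. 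The trace norm is preserved under such conjugations, so one may assume that the Fourier support of $\sigma_{j,k}$ is a box of size $r_j\times r_k$ centered at the origin.

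The remaining task is to prove the frequency-localized trace bound: if $\tau\in\calS(\R^{2n})$ has Fourier support in a box of size $r_j\times r_k$ at the origin, then
\[
\|\tau(X,D)\|_{\calI_1}\le C\,(r_j r_k)^{n/2}\,\|\tau\|_{L^1(\R^{2n})}.
\]
I would combine Gr\"ochenig's inequality $\|\tau(X,D)\|_{\calI_1}\le C\|\tau\|_{M^{1,1}(\R^{2n})}$ with the embedding $\|\tau\|_{M^{1,1}}\le C(r_jr_k)^{n/2}\|\tau\|_{L^1}$ valid for symbols band-limited as above. The latter is proved by estimating the short-time Fourier transform $V_g\tau$ for a suitable Schwartz window $g$: for each fixed position variable, $V_g\tau$ is, up to a mild frequency smearing by $\widehat{g}$, supported in the box of size $r_j\times r_k$, so Cauchy--Schwarz in the frequency variable followed by Plancherel produces the factor $(r_jr_k)^{n/2}$ multiplying the $L^1$ norm of $\tau$. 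Summing the resulting estimates over $(j,k)$ yields the theorem.

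The main obstacle is obtaining the sharp constant $(r_jr_k)^{n/2}$ in this frequency-localized estimate. The naive Hilbert--Schmidt bound $\|\tau\|_{L^2}\le(r_jr_k)^{n/2}\|\tau\|_{L^1}$ only gives the corresponding Hilbert--Schmidt estimate for $\tau(X,D)$, losing a further factor $(r_jr_k)^{n/2}$. The square-root improvement reflects that a band-limited symbol admits a Gabor-type decomposition into roughly $(r_jr_k)^n$ essential atoms whose individual trace norms are controlled by products of $L^2$ norms of window pieces rather than by $L^1$ norms, and capturing this gain cleanly inside the STFT estimate is the delicate point of the argument.
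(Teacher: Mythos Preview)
Your reduction to the frequency-localized estimate
\[
\|\sigma_{j,k}(X,D)\|_{\calI_1}\le C\,r_j^{\,n/2}r_k^{\,n/2}\,\|\sigma_{j,k}\|_{L^1(\R^{2n})}
\]
is correct and coincides with what the paper establishes at the end of Section~\ref{section3}. The conjugation by unimodular phases to recenter the Fourier support is also fine. The genuine gap is in how you propose to prove this localized bound.

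The embedding $\|\tau\|_{M^{1,1}(\R^{2n})}\le C\,(r_jr_k)^{n/2}\|\tau\|_{L^1(\R^{2n})}$ for $\tau$ band-limited to a box of size $r_j\times r_k$ is \emph{false}, so routing through Gr\"ochenig's $M^{1,1}$ bound cannot deliver the sharp constant. Already in one variable: if $\widehat{\phi}\in C_c^{\infty}(\R)$ and $\tau(x)=r\phi(rx)$, then $\widehat{\tau}$ is supported in $\{|\xi|\le r\}$ and $\|\tau\|_{L^1}=\|\phi\|_{L^1}$, but the uniform frequency decomposition of $\tau$ contains $\sim r$ nontrivial pieces each of $L^1$ norm $\sim 1$, so $\|\tau\|_{M^{1,1}}\sim r$, not $r^{1/2}$. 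In your STFT argument, Cauchy--Schwarz plus Plancherel gives
\[
\|\tau\|_{M^{1,1}}\le C\,r^{n/2}\int_{\R^n}\|\tau\cdot T_x g\|_{L^2}\,dx,
\]
and the last integral is \emph{not} controlled by $\|\tau\|_{L^1}$; in the example above it is of order $r^{1/2}$. Thus the step ``Cauchy--Schwarz followed by Plancherel produces the factor $(r_jr_k)^{n/2}$ multiplying $\|\tau\|_{L^1}$'' fails, and the obstacle you flag in your last paragraph is real and cannot be removed along this route.

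The paper obtains the localized bound by a different mechanism that bypasses $M^{1,1}$ entirely. It introduces a \emph{scale-adapted} Rihaczek distribution
\[
R_{Q,Q'}(f,g)(x,\xi)=f(x)\,\overline{\widehat{g}(\xi)}\,e^{-i(x/R_Q)\cdot(\xi/R_{Q'})},
\]
and from it builds a multiplier $\varphi_{Q,Q'}$ equal to $1$ on $\mathrm{supp}(\psi_Q\otimes\psi_{Q'})$. The point is that $\Phi_{Q,Q'}=\calF_{1,2}^{-1}\varphi_{Q,Q'}$ satisfies: each $\Phi_{Q,Q'}(X-y,D-\eta)$ is a \emph{rank-one} operator with
\[
\|\Phi_{Q,Q'}(X-y,D-\eta)\|_{\calI_1}=\|\Phi_{Q,Q'}(X-y,D-\eta)\|_{\calL(L^2)}\le C\,R_Q^{\,n/2}R_{Q'}^{\,n/2}
\]
(Lemma~\ref{3.4}). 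Writing $\psi_Q(D_x)\psi_{Q'}(D_\xi)\sigma$ as an $L^1$-superposition of these rank-one pieces then gives the localized bound directly. The square-root gain you are seeking comes precisely from the fact that the trace norm of a rank-one operator is a product of two $L^2$ norms; the modified phase $e^{-i(x/R_Q)\cdot(\xi/R_{Q'})}$ is exactly what keeps the operator rank one at every scale $(R_Q,R_{Q'})$, which the ordinary Rihaczek phase $e^{-ix\cdot\xi}$ used in the $M^{1,1}$ proof does not.
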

Theorem \ref{1.1} with $\alpha=0$,
which requires $\sigma \in M^{1,1}$, is the result
by Gr\"ochenig \cite{Grochenig-1,Grochenig-2}.
On the other hand,
Theorem \ref{1.1} with $\alpha=1$ states the trace property of
the operators with symbols in the Besov space
$B_{(n/2, n/2)}^{(1,1),(1,1)}$, but there seem to be
few literature mentioning this fact.
We remark that 
the spaces $M^{1,1}$ and $B_{(n/2, n/2)}^{(1,1),(1,1)}$ have
no inclusion relation with each other
(see Proposition \ref{A.1} in Appendix A).
The proof of Theorem \ref{1.1} will be give
in Section \ref{section3}.
It follows the same spirit as used
in \cite{Grochenig-2},
but requires extra arguments.
In fact, roughly speaking, modulation spaces are
characterized by the uniform decomposition $\{k+[-1,1]^n\}_{k\in\Z^n}$
while Besov spaces the dyadic one
$\{\{\xi\in\R^n : 2^{j-1}\le|\xi|\le2^{j+1}\}\}_{j\ge1}$.
The main obstacle of the proof comes from the non-uniformity
of the decomposition used to define the $\alpha$-modulation spaces,
because they are defined by an intermediate type
of uniform and dyadic ones.
In order to overcome the difficulty, we introduce a modified version of 
Rihaczek distribution (see Section \ref{section3}),
whose original one was used in \cite{Grochenig-2}
and works only for the uniform decomposition.
\par
We mention here the relation between known results and ours.
We have already mentioned the result by Daubechies \cite{Daubechies}
which says that 
$\sigma\in L^2_s(\R^{2n})\cap H^s(\R^{2n})$ ($s>2n$) is sufficient for
the corresponding operator to be of the trace class.
This result is a direct consequence of the inclusion
$L^2_s(\R^{2n})\cap H^s(\R^{2n})\subset M^{1,1}(\R^{2n})$ ($s>2n$)
(see Proposition \ref{A.2} (1)).
But there is a significant improvement by
Heil-Ramanathan-Topiwala \cite{H-R-T}
and Gr\"ochenig-Heil \cite{Grochenig-Heil}, which says that
$\sigma\in L^2_s(\R^{2n})\cap H^s(\R^{2n})$ ($s>n$) is sufficient.
This result includes
the pioneering one
\[
\|\sigma(X,D)\|_{\calI_1} \le C
\sum_{|\alpha|+\cdots+|\beta'|\le 2k}
\|x^\alpha\xi^\beta \partial_x^{\alpha'}
\partial_\xi^{\beta'}
\sigma(x,\xi)\|_{L^2(\R^{2n})}
\]
($2k>n$) by H\"ormander \cite{Hormander}
(see also Gr\"ochenig \cite[Corollary 8.40]{Grochenig-2}).
On the other hand, we can say that two conditions
$\sigma\in M^{1,1}$ and
$\sigma\in L^2_s(\R^{2n})\cap H^s(\R^{2n})$ ($s>n$)
are independent ones since we have
$M^{1,1}(\R^{2n})\not\subset L^2_s(\R^{2n})\cap H^s(\R^{2n})$ ($s>n$)
and 
$M^{1,1}(\R^{2n})\not\supset L^2_s(\R^{2n})\cap H^s(\R^{2n})$ ($s\le 2n$)
(see Proposition \ref{A.2} (2), (3)).
Furthermore our new condition
$\sigma\in B_{(n/2, n/2)}^{(1,1),(1,1)}$ is also independent of
them since
$B_{(n/2, n/2)}^{(1,1),(1,1)}(\R^n\times\R^n)
\not\subset L^2_s(\R^{2n})\cap H^s(\R^{2n})$
($s>n$) (see Proposition \ref{A.3} (2)).
Although we cannot expect the inclusion
$B_{(n/2, n/2)}^{(1,1),(1,1)}(\R^n\times\R^n)\supset
L^2_s(\R^{2n})\cap H^s(\R^{2n})$
for $s>n$, it is true at least for $s>2n$
(see Proposition \ref{A.3} (1)),
hence Theorem \ref{1.1} with $\alpha=1$ includes Daubechies' one again.
\par
As an application of Theorem \ref{1.1},
we also discuss the trace property of the commutator $[\sigma(X,D),a]$,
where $a(x)$ is a Lipschitz function.
The $L^2$-boundedness of the commutator was discussed by 
Calder\'on \cite{Calderon},
Coifman-Meyer \cite{Coifman-Meyer}
and Marschall \cite{Marschall}, where
$\sigma$ belongs to H\"ormander's class $S^\rho_{\rho,\delta}$
($\delta\leq\rho$, $0\leq\delta<1$).
In \cite{K-S-T},
we have generalized the result with $\rho=\delta=0$ to the case when
$\sigma
\in M_{(\alpha n/2,\alpha n+1),(\alpha,\alpha)}^{(\infty,\infty),(1,1)}$.
We can again expect the trace property of the commutator
if we assume
$\sigma \in M_{(\alpha n/2,\alpha n+1),(\alpha,\alpha)}^{(1,1),(1,1)}$
instead, replacing $\infty$ by $1$.
In fact we have the following theorem:
\begin{thm}\label{1.2}
Let $0 \le \alpha \le 1$.
Then there exists a constant $C>0$ such that
\[
\|[\sigma(X,D),a]\|_{\calI_1} \le C
\|\nabla{a}\|_{L^{\infty}}
\|\sigma\|_{M_{(\alpha n/2,\alpha n+1),(\alpha,\alpha)}^{(1,1),(1,1)}}
\]
for all Lipschitz functions $a$ and
$\sigma \in M_{(\alpha n/2,\alpha n+1),(\alpha,\alpha)}^{(1,1),(1,1)}
(\R^n\times\R^n)$.
\end{thm}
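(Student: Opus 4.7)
The plan is to reduce Theorem~\ref{1.2} to Theorem~\ref{1.1} by producing a Kohn--Nirenberg symbol $\tau(x,\xi)$ such that $[\sigma(X,D),a]=\tau(X,D)$ and
\[
\|\tau\|_{M_{(\alpha n/2,\alpha n/2),(\alpha,\alpha)}^{(1,1),(1,1)}}
\le C\,\|\nabla a\|_{L^\infty}\,
\|\sigma\|_{M_{(\alpha n/2,\alpha n+1),(\alpha,\alpha)}^{(1,1),(1,1)}}.
\]
Theorem~\ref{1.1} applied to $\tau$ then delivers the trace norm bound.

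The starting point is the kernel identity
\[
[\sigma(X,D),a]f(x)=\int K_\sigma(x,y)(a(y)-a(x))f(y)\,dy,\qquad
K_\sigma(x,y)=(2\pi)^{-n}\int e^{i(x-y)\xi}\sigma(x,\xi)\,d\xi,
\]
combined with the Lipschitz representation $a(y)-a(x)=(y-x)\cdot\int_0^1\nabla a(x+t(y-x))\,dt$. Writing $(y-x)_je^{i(x-y)\xi}=i\partial_{\xi_j}e^{i(x-y)\xi}$ and integrating by parts in $\xi$ converts the commutator into
\[
[\sigma(X,D),a]f(x)=-i\sum_{j=1}^n\int_0^1\!\iint
e^{i(x-y)\xi}(\partial_{\xi_j}\sigma)(x,\xi)(\partial_ja)(x+t(y-x))\,f(y)\,dy\,d\xi\,dt.
\]
Fourier-expanding $\partial_ja$ in a new variable $\eta$ and regrouping exponentials then yields an explicit integral representation for $\tau$, schematically $\tau(x,\xi)\sim\sum_j\int_0^1\!\int\widehat{\partial_ja}(\eta)\,e^{ix\eta}\,(\partial_{\xi_j}\sigma)(x,\xi+t\eta)\,d\eta\,dt$. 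The single $\xi$-derivative of $\sigma$ accounts for the $+1$ shift in the $\xi$-weight of the hypothesis, while $\nabla a$ contributes only through an $L^\infty$-controlled Fourier integral in $\eta$.

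With $\tau$ in hand, the remaining task is to estimate its $\alpha$-modulation norm. I would use the atomic decomposition of the $\alpha$-modulation space together with the modified Rihaczek distribution from Section~\ref{section3}: decompose $\partial_{\xi_j}\sigma$ into atoms supported at scale $\langle\xi_k\rangle^\alpha$ on the non-uniform $\alpha$-cover, analyse the effect of the modulation $e^{ix\eta}$ and the $\xi$-translation $\xi\mapsto\xi+t\eta$ on each atom, and sum the resulting weighted $\ell^1$-series. The main obstacle is this atomic-level analysis: one must verify that the loss incurred by redistributing atoms under the $(t\eta)$-translation absorbs exactly $\alpha n/2$ units of $\xi$-weight, so that together with the $+1$ from the integration by parts the hypothesis weight $\alpha n+1$ is both sufficient and natural. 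For $\alpha=0$ (uniform cover) this loss vanishes and only the single derivative is needed; for $\alpha=1$ (dyadic cover) it sharpens to the $n/2+1$ familiar from Besov-type considerations; intermediate $\alpha$ interpolate continuously between the two extremes. Once the atomic bound is established, Theorem~\ref{1.1} finishes the proof.
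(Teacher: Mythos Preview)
Your kernel identity, the Taylor/integration-by-parts step, and the resulting formula
\[
\tau(x,\xi)\sim\sum_j\int_0^1\!\int\widehat{\partial_ja}(\eta)\,e^{ix\eta}\,(\partial_{\xi_j}\sigma)(x,\xi+t\eta)\,d\eta\,dt
\]
are all correct and match the paper's computation (4.2). The plan to invoke Theorem~\ref{1.1} is also the right one. But there is a genuine gap in the step ``estimate the $\alpha$-modulation norm of $\tau$.''

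The difficulty is that $\eta$ ranges over all of $\R^n$. The modulation $e^{ix\eta}$ shifts the $x$-frequency support of each atom $\sigma_{Q,Q'}$ by $\eta$, and the translation $\xi\mapsto\xi+t\eta$ shifts the $\xi$-frequency support. For bounded $\eta$ these shifts are harmless by Lemma~\ref{2.1}, and your intuition about the $\alpha n/2$ loss is exactly right in that regime. But for large $\eta$ the atoms get carried to arbitrary cells of the $\alpha$-covering, and there is no mechanism in your sketch that controls this: $\widehat{\partial_j a}(\eta)$ need not decay (for Lipschitz $a$ it is only a tempered distribution), so the $\eta$-integral cannot be closed in $\ell^1$ over the covering with only $\|\nabla a\|_{L^\infty}$ on the right-hand side.

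The paper resolves this by inserting a Littlewood--Paley decomposition of $a$, not of $\sigma$: write $a=\varphi_0(D)a+\sum_{j\ge1}\varphi_j(D)a$ and treat the two pieces by completely different methods. For the low-frequency part $\varphi_0(D)a$ the variable $\eta$ is confined to $|\eta|\le2$, and then your atomic argument goes through; the $\alpha n/2$ loss materializes via a Plancherel/Cauchy--Schwarz estimate (Lemma~\ref{4.2}) that converts $\|\partial_j a\|_{L^\infty}$ into an $L^1$ bound at the cost of $(1+R_{Q'})^{n/2}\asymp\langle\xi_{Q'}\rangle^{\alpha n/2}$. For the high-frequency parts $\varphi_j(D)a$ with $j\ge1$ the paper abandons the symbol calculus entirely and uses the ideal property $\|[\sigma(X,D),\varphi_j(D)a]\|_{\calI_1}\le 2\|\varphi_j(D)a\|_{L^\infty}\|\sigma(X,D)\|_{\calI_1}$ together with $\|\varphi_j(D)a\|_{L^\infty}\le C2^{-j}\|\nabla a\|_{L^\infty}$; the geometric decay in $j$ makes the sum converge. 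This splitting is the missing idea in your proposal. A secondary omission is the approximation step (Lemma~\ref{4.1}) needed to pass from $a\in\calS$ to general Lipschitz $a$, since $\widehat{\partial_j a}$ is otherwise not a function.
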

The proof of Theorem \ref{1.2} will be give
in Section \ref{section4}.
We finally remark that the result on the Schatten class $\calI_p$
can be obtained by interpolation argument.
In fact, it is known that $\sigma(X,D)$ is a Hilbert-Schmidt operator
if and only if $\sigma \in L^2(\R^{2n})$, and we have
$\|\sigma(X,D)\|_{\calI_2}=\|\sigma\|_{L^2(\R^{2n})}$
(see Pool \cite{Pool}).
Moreover we can easily see that
$\|\sigma\|_{L^2(\R^{2n})}
\asymp\|\sigma\|_{M_{(0,0),(\alpha,\alpha)}^{(2,2),(2,2)}}$, 
that is,
$\sigma(x,\xi)\in L^2(\R^{2n})$
if and only if
$\sigma(x,\xi)$ belongs to $M^{2,2}_{0,\alpha}$ in both $x$ and $\xi$.
Hence
$\|\sigma(X,D)\|_{\calI_2}
\asymp\|\sigma\|_{M_{(0,0),(\alpha,\alpha)}^{(2,2),(2,2)}}$,
and if we interpolate it with Theorem \ref{1.1}, then we have
\[
\|\sigma(X,D)\|_{\calI_p} \le C
\|\sigma\|
_{M_{(\alpha n(1/p-1/2),\alpha n(1/p-1/2)),(\alpha,\alpha)}^{(p,p),(p,p)}}
\]
for $1\le p\le 2$.
On account of the argument above, we only discuss the trace class
$\calI_1$ in this paper.
\section{Preliminaries}\label{section2}
We first review some of the standard facts on
singular values of compact operators,
following Zhu \cite[Chapter 1]{Zhu} and Simon \cite{Simon}.
Let $1 \le p<\infty$.
The singular values $s_j(T)$ of a compact operator $T$
on $L^2(\R^n)$ are the eigenvalues $\lambda_j(|T|)$
of the positive compact operator $|T|=(T^*T)^{1/2}$,
where $T^*$ is the adjoint of $T$.
We say that a compact operator $T$ belongs
to the Schatten class $\calI_p$
if $\{s_j(T)\}_{j=1}^{\infty} \in \ell^p$.
In this case,
we write $T \in \calI_p$, and define the norm on $\calI_p$ by
$\|T\|_{\calI_p}=\left( \sum_{j=1}^{\infty}s_j(T)^p\right)^{1/p}$.
In particular,
$\calI_1$ and $\calI_2$ are
called the trace and Hilbert-Schmidt classes,
respectively.
It is known that for every $j \in \Z_+=\{0,1,2,\dots\}$
\[
s_{j+1}(T)
=\inf\{\|T-F\|_{\calL(L^2)} : F \in \calF_j\},
\]
where $\calL(L^2(\R^n))$ is the space of all bounded linear operators
on $L^2(\R^n)$, and $\calF_j$ is the class of all linear operators
with rank less than or equal to $j$ (\cite[Theorem 1.34 (a)]{Zhu}).
Consequently,
\begin{equation}\label{(2.1)}
\|T\|_{\calL(L^2)}=s_1(T) \le \|T\|_{\calI_p}.
\end{equation}
Since $\|T\|_{\calI_p}=\|T^*\|_{\calI_p}$ (\cite[p.18]{Zhu})
and
\[
s_{j+1}(T)=\min_{f_1,\dots,f_j}
\max\left\{ \|Tf\| : \|f\|_{L^2}=1, \, \langle f, f_i \rangle=0, \,
1 \le i \le j \right\}
\]
(\cite[Theorem 1.34 (b)]{Zhu}),
where $\langle \cdot,\cdot \rangle$ denotes the $L^2$-inner product,
we see that
\begin{equation}\label{(2.2)}
\|ST\|_{\calI_p} \le \|S\|_{\calL(L^2)}\|T\|_{\calI_p}
\quad \text{and} \quad
\|ST\|_{\calI_p} \le \|S\|_{\calI_p}\|T\|_{\calL(L^2)}.
\end{equation}
If $T\in\calI_p$, then
\begin{equation}\label{(2.3)}
\|T\|_{\calI_p}
=\sup \left(\sum_{j=1}^{\infty}
\left|\langle Tf_j, g_j \rangle\right|^p\right)^{1/p},
\end{equation}
where the supremum is taken over all orthonormal systems
$\{f_j\}, \{g_j\}$ in $L^2(\R^n)$.
Conversely, if $T \in \calL(L^2(\R^n))$ and the right hand side of
\eqref{(2.3)} is finite,
then $T$ is a compact operator and $T\in\calI_p$
(\cite[Proposition 2.6]{Simon}).
\par
Let $\calS(\R^n)$ and $\calS'(\R^n)$ be the Schwartz spaces of
all rapidly decreasing smooth functions
and tempered distributions,
respectively.
We define the Fourier transform $\calF f$
and the inverse Fourier transform $\calF^{-1}f$
of $f \in \calS(\R^n)$ by
\[
\calF f(\xi)
=\widehat{f}(\xi)
=\int_{\R^n}e^{-i\xi \cdot x}\, f(x)\, dx
\quad \text{and} \quad
\calF^{-1}f(x)
=\frac{1}{(2\pi)^n}
\int_{\R^n}e^{ix\cdot \xi}\, f(\xi)\, d\xi.
\]
Let $\sigma(x,\xi) \in \calS(\R^n\times\R^n)$.
We denote by $\calF_1\sigma(y,\xi)$ and $\calF_2\sigma(x,\eta)$
the partial Fourier transforms of $\sigma$ in the first variable
and in the second variable,
respectively.
That is, $\calF_1\sigma(y,\xi)=\calF[\sigma(\cdot,\xi)](y)$
and $\calF_2\sigma(x,\eta)=\calF[\sigma(x,\cdot)](\eta)$.
We also denote by $\calF_1^{-1}\sigma$ and $\calF_2^{-1}\sigma$
the partial inverse Fourier transforms of
$\sigma$ in the first variable
and in the second variable,
respectively.
We write $\calF_{1,2}=\calF_1\calF_2$
and $\calF_{1,2}^{-1}=\calF_1^{-1}\calF_2^{-1}$,
and note that $\calF_{1,2}$ and $\calF_{1,2}^{-1}$
are the usual Fourier transform 
and inverse Fourier transform
of functions on $\R^n\times\R^n$.
\par
We introduce the $\alpha$-modulation spaces
based on Borup-Nielsen \cite{B-N,B-N-2}.
Let $B(\xi,r)$ be the ball with center $\xi$ and radius $r$,
where $\xi \in \R^n$ and $r>0$.
A countable set $\calQ$ of subsets $Q \subset \R^n$
is called an admissible covering
if $\R^n=\cup_{Q \in \calQ}Q$
and there exists a constant $n_0$ such that
$\sharp \{Q' \in \calQ : Q \cap Q' \neq \emptyset\} \le n_0$
for all $Q \in \calQ$.
We denote by $|Q|$ the Lebesgue measure of $Q$,
and set $\langle \xi \rangle=(1+|\xi|^2)^{1/2}$,
where $\xi \in \R^n$.
Let $0 \le \alpha \le 1$,
\begin{equation}\label{(2.4)}
\begin{split}
&r_Q=\sup\{r>0 :
B(c_r,r) \subset Q \quad \text{for some $c_r \in \R^n$}\},
\\
&R_Q=\inf\{R>0 :
Q \subset B(c_R,R) \quad \text{for some $c_R \in \R^n$}\}.
\end{split}
\end{equation}
We say that an admissible covering $\calQ$
is an $\alpha$-covering of $\R^n$
if $|Q| \asymp \langle \xi \rangle^{\alpha n}$
(uniformly)
for all $\xi \in Q$ and $Q \in \calQ$,
and there exists a constant $K \ge 1$
such that $R_Q/r_Q \le K$ for all $Q \in \calQ$,
where
$\lq\lq|Q| \asymp \langle \xi \rangle^{\alpha n}$
(uniformly)
for all $\xi \in Q$ and $Q \in \calQ$"
means that
there exists a constant $C>0$ such that
\[
C^{-1}\langle \xi \rangle^{\alpha n}
\le |Q| \le
C \langle \xi \rangle^{\alpha n}
\qquad \text{for all $\xi \in Q$ and $Q \in \calQ$}.
\]
Let $r_Q$ and $R_Q$ be as in \eqref{(2.4)}.
We note that
\begin{equation}\label{(2.5)}
B(c_Q,r_Q/2) \subset Q \subset B(d_Q,2R_Q)
\qquad \text{for some $c_Q,d_Q \in \R^n$},
\end{equation}
and
there exists a constant $\kappa_1>0$ such that
\begin{equation}\label{(2.6)}
|Q| \ge \kappa_1
\qquad \text{for all $Q \in \calQ$}
\end{equation}
since $|Q| \asymp \langle \xi_Q \rangle^{\alpha n} \ge 1$,
where $\xi_Q \in Q$.
By \eqref{(2.4)},
we see that
$s_n r_Q^n \le |Q| \le s_n R_Q^n$,
where $s_n$ is the volume of the unit ball in $\R^n$.
This implies
\[
s_n \le \frac{|Q|}{r_Q^n}
=\frac{R_Q^n}{r_Q^n}\, \frac{|Q|}{R_Q^n}
\le K^n\, \frac{|Q|}{R_Q^n}
\le K^n\, s_n,
\]
that is,
\begin{equation}\label{(2.7)}
|Q| \asymp r_Q^n \asymp R_Q^n
\qquad \text{for all $Q \in \calQ$}
\end{equation}
(see \cite[Appendix B]{B-N}).
It follows from \eqref{(2.6)} and \eqref{(2.7)} that
there exists a constant $\kappa_2>0$ such that
\begin{equation}\label{(2.8)}
R_Q \ge \kappa_2
\qquad \text{for all $Q \in \calQ$}.
\end{equation}
We also use the fact
\begin{equation}\label{(2.9)}
\langle \xi_Q \rangle
\asymp \langle \xi_Q' \rangle
\qquad \text{for all $\xi_Q,\xi_Q' \in Q$ and $Q \in \calQ$}.
\end{equation}
If $\alpha \neq 0$,
then \eqref{(2.9)} follows directly
from the definition of $\alpha$-covering
$|Q| \asymp \langle \xi_Q \rangle^{\alpha n}$.
By \eqref{(2.7)},
if $\alpha=0$ then
$R_Q^n \asymp |Q| \asymp \langle \xi_Q \rangle ^{\alpha n}=1$,
and consequently
there exists $R>0$ such that $R_Q \le R$ for all $Q \in \calQ$.
Hence,
by \eqref{(2.5)},
we have
$Q \subset B(d_Q, 2R)$ for some $d_Q \in \R^n$.
This implies that $\eqref{(2.9)}$ is true even if $\alpha=0$.
\par
Given an $\alpha$-covering $\calQ$ of $\R^n$,
we say that $\{\psi_Q\}_{Q \in \calQ}$
is a corresponding bounded
admissible partition of unity (BAPU) if
$\{\psi_Q\}_{Q \in \calQ}$ satisfies
\begin{enumerate}
\item
$\mathrm{supp}\, \psi_Q \subset Q$,
\item
$\sum_{Q \in \calQ}\psi_Q(\xi)=1$ for all $\xi \in \R^n$,
\item
$\sup_{Q \in \calQ}\|\calF^{-1}\psi_Q\|_{L^1}<\infty$.
\end{enumerate}
We remark that an $\alpha$-covering $\calQ$ of $\R^n$
with a corresponding BAPU
$\{\psi_Q\}_{Q \in \calQ} \subset \calS(\R^n)$
actually exists for every $0\le \alpha \le 1$
(\cite[Proposition A.1]{B-N}).
Let $1 \le p,q \le \infty$, $s \in \R$,
$0 \le \alpha \le 1$
and $\calQ$ be an $\alpha$-covering of $\R^n$
with a corresponding BAPU
$\{\psi_Q\}_{Q \in \calQ} \subset \calS(\R^n)$.
Fix a sequence $\{\xi_Q\}_{Q \in \calQ} \subset \R^n$
satisfying $\xi_Q \in Q$ for every $Q \in \calQ$.
Then the $\alpha$-modulation space $M_{s,\alpha}^{p,q}(\R^n)$
consists of all $f \in \calS'(\R^n)$ such that
\[
\|f\|_{M_{s,\alpha}^{p,q}}
=\left(\sum_{Q \in \calQ}
\langle \xi_Q \rangle^{sq}
\|\psi_Q(D)f\|_{L^p}^q \right)^{1/q}<\infty
\]
(with obvious modification in the case $q=\infty$),
where
$\psi(D)f=\calF^{-1}[\psi\, \widehat{f}]=(\calF^{-1}\psi)*f$.
We remark that
the definition of $M_{s,\alpha}^{p,q}$
is independent of the choice
of the $\alpha$-covering $\calQ$,
BAPU $\{\psi_Q\}_{Q \in \calQ}$
and sequence $\{\xi_Q\}_{Q \in \calQ}$
(see \cite[Section 2]{B-N,B-N-2}).
Let $\psi \in \calS(\R^n)$ be such that
\begin{equation}\label{(2.10)}
\mathrm{supp}\, \psi \subset [-1,1]^n,
\qquad
\sum_{k \in \Z^n}\psi(\xi-k)=1
\quad
\text{for all $\xi \in \R^n$}.
\end{equation}
If $\alpha=0$
then the $\alpha$-modulation space $M_{s,\alpha}^{p,q}(\R^n)$
coincides with the modulation space $M_s^{p,q}(\R^n)$,
that is, $\|f\|_{M_{s,\alpha}^{p,q}} \asymp \|f\|_{M_s^{p,q}}$,
where
\[
\|f\|_{M_s^{p,q}}
=\left(\sum_{k \in \Z^n}\langle k \rangle^{sq}
\|\psi(D-k)f\|_{L^p}^q \right)^{1/q}.
\]
If $s=0$, then we write $M^{p,q}(\R^n)$ instead of $M_0^{p,q}(\R^n)$.
Let $\varphi_0,\varphi \in \calS(\R^n)$ be such that
\begin{equation}\label{(2.11)}
\mathrm{supp}\, \varphi_0 \subset \{|\xi|\le 2\},
\quad
\mathrm{supp}\, \varphi \subset \{1/2 \le |\xi| \le 2\},
\quad
\varphi_0(\xi)+\sum_{j=1}^{\infty}\varphi(2^{-j}\xi)=1
\end{equation}
for all $\xi \in \R^n$,
and set $\varphi_j(\xi)=\varphi(\xi/2^j)$ if $j \ge 1$.
On the other hand, if $\alpha=1$ then
the $\alpha$-modulation space $M_{s,\alpha}^{p,q}(\R^n)$
coincides with the Besov space $B_s^{p,q}(\R^n)$,
that is, $\|f\|_{M_{s,\alpha}^{p,q}} \asymp \|f\|_{B_s^{p,q}}$,
where
\[
\|f\|_{B_s^{p,q}}
=\left(\sum_{j=0}^{\infty}2^{jsq}
\|\varphi_j(D)f\|_{L^p}^q \right)^{1/q}.
\]
We remark that we can actually check that
the $\alpha$-covering $\calQ$ with the corresponding BAPU
$\{\psi_Q\}_{Q \in \calQ} \subset \calS(\R^n)$
given in \cite[Proposition A.1]{B-N}
satisfies
\begin{equation}\label{(2.12)}
\sum_{Q \in \calQ}
\psi_Q(D)f=f
\quad \text{in} \quad \calS'(\R^n)
\qquad \text{for all $f \in \calS'(\R^n)$}
\end{equation}
and
\begin{equation}\label{(2.13)}
\sum_{Q \in \calQ}\sum_{Q' \in \calQ}
\psi_Q(D_x)\psi_{Q'}(D_\xi)\sigma(x,\xi)=\sigma(x,\xi)
\quad \text{in} \quad \calS'(\R^n \times \R^n)
\end{equation}
for all $\sigma \in \calS'(\R^n \times \R^n)$,
where $0\le \alpha<1$,
\[
\psi_Q(D_x)\psi_{Q'}(D_\xi)\sigma
=\calF_{1,2}^{-1}[(\psi_Q \otimes\psi_{Q'})\, \calF_{1,2}\sigma]
=[(\calF^{-1}\psi_Q)\otimes(\calF^{-1}\psi_{Q'})]*\sigma
\]
and $\psi_Q \otimes\psi_{Q'}(x,\xi)=\psi_Q(x)\, \psi_{Q'}(\xi)$.
In the case $\alpha=1$,
\eqref{(2.12)} and {(2.13)} are well known facts,
since we can take $\{\varphi_j\}_{j \ge 0}$
as a BAPU corresponding to the $\alpha$-covering
$\{\{|\xi| \le 2\}, \{\{2^{j-1}\le |\xi| \le 2^{j+1}\}\}_{j\ge 1}\}$,
where $\{\varphi_j\}_{j \ge 0}$ is as in \eqref{(2.11)}.
In the rest of this paper, we assume that
an $\alpha$-covering $\calQ$ with a corresponding BAPU
$\{\psi_Q\}_{Q \in \calQ} \subset \calS(\R^n)$ always satisfies
\eqref{(2.12)} and \eqref{(2.13)}.
\par
We introduce the product $\alpha$-modulation spaces
$M_{(s_1,s_2),(\alpha,\alpha)}^{(p,p),(q,q)}(\R^n\times\R^n)$
as symbol classes
of pseudo-differential operators.
Let $1 \le p,q \le \infty$,
$s_1,s_2 \in \R$, $0 \le \alpha \le 1$
and $\calQ$ be an $\alpha$-covering of $\R^n$
with a corresponding BAPU
$\{\psi_Q\}_{Q \in \calQ} \subset \calS(\R^n)$.
Fix two sequences
$\{x_Q\}_{Q \in \calQ}, \{\xi_{Q'}\}_{Q' \in \calQ} \subset \R^n$
satisfying $x_Q \in Q$ and $\xi_{Q'} \in Q'$
for every $Q,Q' \in \calQ$.
Then the product $\alpha$-modulation space
$M_{(s_1,s_2),(\alpha,\alpha)}^{(p,p),(q,q)}(\R^n\times\R^n)$
consists of
all $\sigma \in \calS'(\R^n\times\R^n)$ such that
\begin{align*}
\|\sigma\|_{M_{(s_1,s_2),(\alpha,\alpha)}^{(p,p),(q,q)}}
&=
\left\{
\sum_{Q \in \calQ}\sum_{Q' \in \calQ}
\left(
\langle x_Q \rangle^{s_1}\langle \xi_{Q'} \rangle^{s_2}
\|\psi_Q(D_x)\psi_{Q'}(D_\xi)\sigma\|_{L^p(\R^n\times\R^n)}
\right)^q
\right\}^{1/q}
\\
&<\infty
\end{align*}
(with obvious modification in the case $q=\infty$).
Since we can take $\{\psi(\cdot-k)\}_{k \in \Z^n}$
as a BAPU corresponding to the $\alpha$-covering
$\{k+[-1,1]^n\}_{k \in \Z^n}$ if $\alpha=0$,
we have
$M_{(s_1,s_2),(0,0)}^{(p,p),(q,q)}(\R^n\times\R^n)
=M_{(s_1,s_2)}^{(p,p),(q,q)}(\R^n\times\R^n)$, where
\[
\|\sigma\|_{M_{(s_1,s_2)}^{(p,p),(q,q)}}
=
\left\{
\sum_{k \in \Z^n}\sum_{\ell \in \Z^n}
\left(
\langle k \rangle^{s_1}\langle \ell \rangle^{s_2}
\|\psi(D_x-k)\psi(D_{\xi}-\ell)\sigma\|_{L^{p}(\R^n\times\R^n)}
\right)^q
\right\}^{1/q}
\]
and $\psi \in \calS(\R^n)$ is as in \eqref{(2.10)}.
In particular,
the space
$M_{(0,0),(0,0)}^{(p,p),(q,q)}(\R^n\times\R^n)$
of product type on $\R^n\times\R^n$
coincides with the ordinary modulation space
$M^{p,q}(\R^{2n})$
on $\R^{2n}$.
Here we have used the fact that
$\psi\otimes\psi$ satisfies \eqref{(2.10)} with $2n$ instead of $n$.
Similarly,
$M_{(s_1,s_2),(1,1)}^{(p,p),(q,q)}(\R^n\times\R^n)
=B_{(s_1,s_2)}^{(p,p),(q,q)}
(\R^n\times\R^n)$, where
\[
\|\sigma\|_{B_{(s_1,s_2)}^{(p,p),(q,q)}}
=
\left\{
\sum_{j=0}^{\infty}\sum_{k=0}^{\infty}
\left(
2^{js_1+ks_2}
\|\varphi_j(D_x)\varphi_k(D_{\xi})\sigma\|_{L^{p}(\R^n\times\R^n)}
\right)^q
\right\}^{1/q}
\]
and 
$\{\varphi_j\}_{j \ge 0}$ is as in \eqref{(2.11)}
(see Sugimoto \cite[p.116]{Sugimoto}).
Hereafter,
we simply write
$M_{(s_1,s_2),\bm{\alpha}}^{\bm{p},\bm{q}}(\R^n\times\R^n)$
instead of
$M_{(s_1,s_2),(\alpha,\alpha)}^{(p,p),(q,q)}(\R^n\times\R^n)$,
where $\bm{p}=(p,p)$, $\bm{q}=(q,q)$
and $\bm{\alpha}=(\alpha,\alpha)$.
\par
We remark the following basic facts,
and give the proof in Appendix B for reader's convenience.
\begin{lem}[{\cite[Lemma 2.1]{K-S-T}}]\label{2.1}
Let $\calQ$ be an $\alpha$-covering of $\R^n$ and $R>0$.
Then the following are true:
\begin{enumerate}
\item
If $(Q+B(0,R))\cap Q' \neq \emptyset$,
then there exists a constant $\kappa>0$ such that
\[
\kappa^{-1} \langle \xi_Q \rangle \le
\langle \xi_{Q,Q'} \rangle
\le \kappa \langle \xi_{Q} \rangle
\quad \text{and} \quad
\kappa^{-1} \langle \xi_{Q'} \rangle \le
\langle \xi_{Q,Q'} \rangle
\le \kappa \langle \xi_{Q'} \rangle
\]
for all $\xi_Q \in Q$, $\xi_{Q'} \in Q'$ and
$\xi_{Q,Q'} \in (Q+B(0,R))\cap Q'$,
where $\kappa$ is independent of $Q,Q' \in \calQ$.
In particular, $\langle \xi_Q \rangle \asymp \langle \xi_{Q'} \rangle$.
\item
There exists a constant $n_0'$ such that
\[
\sharp \{Q' \in \calQ: (Q+B(0,R))\cap Q' \neq \emptyset\} \le n_0'
\qquad \text{for all $Q \in \calQ$}.
\]
\end{enumerate}
\end{lem}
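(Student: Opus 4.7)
The plan is to derive both assertions from the geometric properties of $\alpha$-coverings already collected in \eqref{(2.5)}--\eqref{(2.9)}. The only analytic ingredient needed is the elementary observation that the Japanese bracket is insensitive to translation by a ball of fixed radius: whenever $|y-z|\le R$ one has $\langle y\rangle\le 1+|y|\le 1+|z|+R\le(1+R)\langle z\rangle$, and symmetrically, so $\langle y\rangle\asymp\langle z\rangle$ with a constant depending only on $R$.

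For (1), I would pick $\xi_{Q,Q'}\in(Q+B(0,R))\cap Q'$ and decompose $\xi_{Q,Q'}=w+z$ with $w\in Q$ and $|z|\le R$. The bracket comparison above applied to $w$ and $\xi_{Q,Q'}$ gives $\langle w\rangle\asymp\langle\xi_{Q,Q'}\rangle$; since $w,\xi_Q\in Q$, \eqref{(2.9)} gives $\langle w\rangle\asymp\langle\xi_Q\rangle$; and since $\xi_{Q,Q'},\xi_{Q'}\in Q'$, \eqref{(2.9)} also gives $\langle\xi_{Q,Q'}\rangle\asymp\langle\xi_{Q'}\rangle$. Chaining these three equivalences produces the two-sided inequalities claimed in (1), and in particular the concluding statement $\langle\xi_Q\rangle\asymp\langle\xi_{Q'}\rangle$, with constants independent of $Q,Q'$ because each intermediate constant is.

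For (2), fix $Q\in\calQ$ and set $\calS=\{Q'\in\calQ:(Q+B(0,R))\cap Q'\ne\emptyset\}$. From \eqref{(2.5)} one has $Q+B(0,R)\subset B(d_Q,2R_Q+R)$, and from the uniform lower bound $R_Q\ge\kappa_2$ in \eqref{(2.8)} together with $R$ being fixed, there is a constant $C_0$ (independent of $Q$) with $2R_Q+R\le C_0 R_Q$. Part (1) applied to each $Q'\in\calS$, combined with $|Q'|\asymp\langle\xi_{Q'}\rangle^{\alpha n}\asymp\langle\xi_Q\rangle^{\alpha n}\asymp|Q|$ and \eqref{(2.7)}, yields $R_{Q'}\asymp R_Q$ uniformly in $Q'\in\calS$. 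Each such $Q'$ meets $B(d_Q,C_0 R_Q)$ and has diameter at most $4R_{Q'}$, hence is contained in some $B(d_Q,C_1 R_Q)$ with $C_1$ independent of $Q$ and $Q'$. Writing $V=\bigcup_{Q'\in\calS}Q'\subset B(d_Q,C_1 R_Q)$ and using the admissibility bound $\sum_{Q''\in\calQ}\chi_{Q''}\le n_0$, we obtain
\[
(\sharp\calS)\,\min_{Q'\in\calS}|Q'|
\le\sum_{Q'\in\calS}|Q'|
\le n_0\,|V|
\le C_2 R_Q^n\asymp|Q|,
\]
and dividing by $|Q'|\asymp|Q|$ gives the uniform bound $\sharp\calS\le n_0'$.

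The one step that requires real attention is the absorption $2R_Q+R\le C_0 R_Q$: it relies crucially on \eqref{(2.8)}, without which the additive constant $R$ would destroy the scaling in the regime where $Q$ is small (i.e.\ $\alpha$ close to $0$), and the volume estimate in (2) would break. Everything else is careful bookkeeping of the constants implicit in the definition of an $\alpha$-covering.
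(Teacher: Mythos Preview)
Your argument is correct and follows essentially the same route as the paper's proof: part (1) is verbatim the paper's argument, and part (2) uses the same chain $|Q'|\asymp|Q|\Rightarrow R_{Q'}\asymp R_Q\Rightarrow Q'\subset B(d_Q,C R_Q)$ together with a volume count. The only cosmetic difference is that the paper phrases the overlap bound via a decomposition $\calQ=\bigcup_{i=1}^{n_0}\calQ_i$ into pairwise disjoint subfamilies (citing \cite[Lemma B.1]{B-N}), whereas you use the equivalent pointwise inequality $\sum_{Q''}\chi_{Q''}\le n_0$ directly; both lead to the same estimate $\sum_{Q'\in\calS}|Q'|\lesssim|Q|$.
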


\section{Trace property of pseudo-differential operators}\label{section3}
In this section,
we prove Theorem \ref{1.1}.
For $\sigma \in \calS'(\R^n\times\R^n)$,
the pseudo-differential operator $\sigma(X,D)$
is defined by
\[
\sigma(X,D)f(x)
=\frac{1}{(2\pi)^n}
\int_{\R^n}e^{ix\cdot\xi}\, \sigma(x,\xi)\, \widehat{f}(\xi)\, d\xi
\qquad \text{for $f \in \calS(\R^n)$}.
\]
We define the Rihaczek distribution $R(f,g)$ of $f$ and $g$ by
\[
R(f,g)(x,\xi)
=f(x)\, \overline{\widehat{g}(\xi)}\, e^{-ix\cdot\xi}
\qquad \text{for $x,\xi \in \R^n$.}
\]
Then
\[
\langle \sigma(X,D)f, g \rangle
=(2\pi)^{-n} \langle \sigma, R(g,f) \rangle
\qquad \text{for all $f,g \in \calS(\R^n)$}.
\]
Gr\"ochenig proved that
$\sigma(X,D)$ is a trace operator if $\sigma \in M^{1,1}(\R^{2n})$,
and the Rihaczek distribution plays an important role
in his proof \cite{Grochenig-2}.
\par
Let $0 \le \alpha \le 1$ and
$\calQ$ be an $\alpha$-covering of $\R^n$
with a corresponding BAPU
$\{\psi_Q\}_{Q \in \calQ} \subset \calS(\R^n)$.
In order to prove Theorem \ref{1.1},
we introduce a modified version of Rihaczek distribution $R_{Q,Q'}(f,g)$
of $f$ and $g$ defined by
\[
R_{Q,Q'}(f,g)(x,\xi)
=f(x)\, \overline{\widehat{g}(\xi)}\, e^{-i(x/R_Q)\cdot(\xi/R_{Q'})}
\qquad \text{for $x,\xi \in \R^n$},
\]
where $f,g \in \calS(\R^n)$, $Q,Q' \in \calQ$,
and $R_Q,R_{Q'}$ are as in \eqref{(2.4)}.
We denote by $\widehat{R}_{Q,Q'}(f,g)$
the Fourier transform of $R_{Q,Q'}(f,g)$
in both variables $x,\xi \in \R^n$,
that is,
$\widehat{R}_{Q,Q'}(f,g)=\calF_{1,2}R_{Q,Q'}(f,g)$.
\begin{lem}\label{3.1}
Let $f,g \in \calS(\R^n)$.
Then
\[
\widehat{R}_{Q,Q'}(f,g)(y,\eta)
=\int_{\R^n}e^{-i\eta\cdot\xi}\,
\widehat{f}(y+(\xi/R_QR_{Q'}))\,
\overline{\widehat{g}(\xi)}\, d\xi.
\]
\end{lem}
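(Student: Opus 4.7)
The plan is to compute $\widehat{R}_{Q,Q'}(f,g)(y,\eta)=\calF_{1,2}R_{Q,Q'}(f,g)(y,\eta)$ directly from the definition and then recognize the inner integral as a Fourier transform of $f$ evaluated at a shifted point. Since this is simply an unwinding of definitions, the argument is almost entirely mechanical; the only genuine content is a change in the order of integration and the grouping of the phase factors in $x$.

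First I would write out
\[
\widehat{R}_{Q,Q'}(f,g)(y,\eta)
=\int_{\R^n}\!\int_{\R^n}
e^{-iy\cdot x}\,e^{-i\eta\cdot\xi}\,
f(x)\,\overline{\widehat{g}(\xi)}\,
e^{-i(x/R_Q)\cdot(\xi/R_{Q'})}\,dx\,d\xi,
\]
and then collect the two exponentials that depend on $x$:
\[
e^{-iy\cdot x}\,e^{-i(x/R_Q)\cdot(\xi/R_{Q'})}
=\exp\bigl(-ix\cdot(y+\xi/(R_Q R_{Q'}))\bigr).
\]

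Next I would invoke Fubini's theorem to exchange the order of integration; this is legitimate because $f,g\in\calS(\R^n)$ so $\widehat{g}\in\calS(\R^n)$ and the integrand is absolutely integrable on $\R^n\times\R^n$. Performing the inner $x$-integration gives
\[
\int_{\R^n}f(x)\,e^{-ix\cdot(y+\xi/(R_Q R_{Q'}))}\,dx
=\widehat{f}\bigl(y+\xi/(R_Q R_{Q'})\bigr),
\]
and substituting back yields exactly the claimed identity.

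The argument has no real obstacle — the only point worth checking is that the shifted Fourier transform picks up the factor $1/(R_Q R_{Q'})$ coming from the two separate scalings $x/R_Q$ and $\xi/R_{Q'}$ in the phase of $R_{Q,Q'}$; this is the mechanism by which the non-uniform scale parameters of the $\alpha$-covering $\calQ$ are transferred from the spatial side to the frequency side, which is the very reason for introducing the modified Rihaczek distribution in place of the standard one used by Gr\"ochenig.
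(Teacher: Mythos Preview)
Your proof is correct and is essentially identical to the paper's own argument: write out the double integral, apply Fubini (justified since $f,\widehat{g}\in\calS$), group the $x$-phases, and recognize the inner $x$-integral as $\widehat{f}(y+\xi/(R_QR_{Q'}))$. The closing remark about the role of the scale factors is accurate commentary but not needed for the proof itself.
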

\begin{proof}
By Fubini's theorem,
\begin{align*}
\widehat{R}_{Q,Q'}(f,g)(y,\eta)
&=\int_{\R^n}\int_{\R^n}
e^{-i(y\cdot x+\eta\cdot\xi)}\, 
R_{Q,Q'}(f,g)(x,\xi)\, dx\, d\xi
\\
&=\int_{\R^n}
e^{-i\eta\cdot\xi}\, \overline{\widehat{g}(\xi)}
\left( \int_{\R^n}e^{-i (y+(\xi/R_QR_{Q'}))\cdot x}\,
f(x)\, dx\right) d\xi
\\
&=\int_{\R^n}
e^{-i\eta\cdot\xi}\, \widehat{f}(y+(\xi/R_QR_{Q'}))\,
\overline{\widehat{g}(\xi)}\, d\xi.
\end{align*}
The proof is complete.
\end{proof}
Let $\varphi_1,\varphi_2 \in \calS(\R^n)\setminus\{0\}$ be such that
\begin{equation}\label{(3.1)}
\widehat{\varphi_1}, \widehat{\varphi_2} \ge 0, \quad
\widehat{\varphi_1} \ge 1 \
\text{on $\{\xi : |\xi|\le 4+1/4\kappa_2^2\}$},
\quad \mathrm{supp}\, \widehat{\varphi_2}
\subset \{\xi : |\xi| \le 1/4\},
\end{equation}
where $\kappa_2$ is as in \eqref{(2.8)}.
\begin{lem}\label{3.2}
Let $\varphi_1,\varphi_2 \in \calS(\R^n)$ be as in \eqref{(3.1)}.
Then the following are true:
\begin{enumerate}
\item
For every $\alpha,\beta \in \Z_+^n$,
$\sup_{Q,Q' \in \calQ}\|\partial_y^{\alpha}\partial_\eta^{\beta}
\widehat{R}_{Q,Q'}(\varphi_1,\varphi_2)
\|_{L^{\infty}(\R^n\times\R^n)}<\infty$.
\item
There exists a constant $C>0$ such that
$|\widehat{R}_{Q,Q'}(\varphi_1,\varphi_2)(y,\eta)| \ge C$
for all $Q,Q' \in \calQ$,
$|y| \le 4$ and $|\eta| \le 4$.
\end{enumerate}
\end{lem}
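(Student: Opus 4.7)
My plan is to start from the integral representation
\[
\widehat{R}_{Q,Q'}(\varphi_1,\varphi_2)(y,\eta)
=\int_{\R^n}e^{-i\eta\cdot\xi}\,
\widehat{\varphi_1}\!\left(y+\tfrac{\xi}{R_QR_{Q'}}\right)
\overline{\widehat{\varphi_2}(\xi)}\,d\xi
\]
given by Lemma \ref{3.1}, and exploit two crucial geometric facts: the compact support condition $\mathrm{supp}\,\widehat{\varphi_2}\subset\{|\xi|\le 1/4\}$ from \eqref{(3.1)}, and the uniform lower bound $R_Q,R_{Q'}\ge\kappa_2$ from \eqref{(2.8)}, which together yield $|\xi/(R_QR_{Q'})|\le 1/(4\kappa_2^2)$ on the effective domain of integration, uniformly in $Q,Q'$.

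For assertion (1), I would differentiate under the integral sign. The derivative $\partial_\eta^\beta$ brings down $(-i\xi)^\beta$, and $\partial_y^\alpha$ produces $(\partial^\alpha \widehat{\varphi_1})(y+\xi/(R_QR_{Q'}))$. Since $\widehat{\varphi_2}$ is supported in the unit ball of radius $1/4$, the integration is over a fixed compact set, and $\partial^\alpha\widehat{\varphi_1}\in L^\infty$. Consequently
\[
\bigl|\partial_y^\alpha\partial_\eta^\beta\widehat{R}_{Q,Q'}(\varphi_1,\varphi_2)(y,\eta)\bigr|
\le \|\partial^\alpha\widehat{\varphi_1}\|_{L^\infty}
\int_{|\xi|\le 1/4}|\xi|^{|\beta|}\,\widehat{\varphi_2}(\xi)\,d\xi,
\]
which is a finite constant independent of $Q,Q'$, $y$ and $\eta$.

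For assertion (2), which is the more delicate part, I would show that the real part of the integral is bounded below by a positive constant. Under the hypotheses $|y|\le 4$ and $|\xi|\le 1/4$, the argument of $\widehat{\varphi_1}$ satisfies
\[
\Bigl|y+\tfrac{\xi}{R_QR_{Q'}}\Bigr|\le 4+\tfrac{1}{4\kappa_2^2},
\]
so by \eqref{(3.1)} we have $\widehat{\varphi_1}(y+\xi/(R_QR_{Q'}))\ge 1$ throughout the domain of integration. Moreover, when $|\eta|\le 4$ and $|\xi|\le 1/4$ we have $|\eta\cdot\xi|\le 1$, hence $\cos(\eta\cdot\xi)\ge\cos 1>0$. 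Combining these with $\widehat{\varphi_2}\ge 0$, I obtain
\[
\mathrm{Re}\,\widehat{R}_{Q,Q'}(\varphi_1,\varphi_2)(y,\eta)
=\int\cos(\eta\cdot\xi)\,\widehat{\varphi_1}\!\left(y+\tfrac{\xi}{R_QR_{Q'}}\right)\widehat{\varphi_2}(\xi)\,d\xi
\ge(\cos 1)\int\widehat{\varphi_2}(\xi)\,d\xi.
\]
Since $\widehat{\varphi_2}$ is continuous, non-negative and non-zero (as $\varphi_2\neq 0$), the last integral is a strictly positive constant (equal to $(2\pi)^n\varphi_2(0)$), giving the desired uniform lower bound $|\widehat{R}_{Q,Q'}(\varphi_1,\varphi_2)(y,\eta)|\ge C$.

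The only subtlety I anticipate is ensuring that the specific constants in \eqref{(3.1)} — in particular the choice of the radius $4+1/(4\kappa_2^2)$ for the region where $\widehat{\varphi_1}\ge 1$, and the radius $1/4$ for $\mathrm{supp}\,\widehat{\varphi_2}$ — mesh correctly with the range $|y|\le 4$, $|\eta|\le 4$ that will later be used when the Rihaczek distribution is probed against translates. All other steps are straightforward estimates of integrals over a fixed compact set.
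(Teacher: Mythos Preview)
Your proposal is correct and follows essentially the same route as the paper's own proof: differentiate under the integral (via Lemma~\ref{3.1}) and use the compact support of $\widehat{\varphi_2}$ for part~(1), then bound the real part of the integral from below using $\cos(\eta\cdot\xi)\ge\cos 1>0$ and $\widehat{\varphi_1}(y+\xi/(R_QR_{Q'}))\ge 1$ on the effective domain for part~(2). The constants and the final lower bound $(\cos 1)\|\widehat{\varphi_2}\|_{L^1}$ match the paper's argument exactly.
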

\begin{proof}
By Lemma \ref{3.1},
\[
\partial_y^{\alpha}\partial_\eta^{\beta}
\widehat{R}_{Q,Q'}(\varphi_1,\varphi_2)(y,\eta)
=\int_{\R^n}e^{-i\eta\cdot\xi}\, (-i\xi)^{\beta}\,
(\partial^{\alpha}\widehat{\varphi_1})(y+(\xi/R_QR_{Q'}))\,
\overline{\widehat{\varphi_2}(\xi)}\, d\xi.
\]
Hence,
\[
|\partial_y^{\alpha}\partial_\eta^{\beta}
\widehat{R}_{Q,Q'}(\varphi_1,\varphi_2)(y,\eta)|
\le 4^{-|\beta|}\|\partial^{\alpha}
\widehat{\varphi_1}\|_{L^{\infty}}\|\widehat{\varphi_2}\|_{L^1}
\]
for all $y,\eta \in \R^n$ and $Q,Q' \in \calQ$,
and this is the first part.
\par
We next consider the second part.
Note that $\cos(\eta\cdot\xi) \ge C>0$
for all $|\eta|\le 4$ and $|\xi| \le 1/4$
since $|\eta\cdot\xi| \le 1$.
Similarly,
$\widehat{\varphi_1}(y+(\xi/R_QR_{Q'})) \ge 1$
for all $|y| \le 4$ and $|\xi| \le 1/4$
since $|y+(\xi/R_QR_{Q'})| \le 4+1/4\kappa_2^2$,
where $\kappa_2$ is as in \eqref{(3.1)}.
Therefore, by Lemma \ref{3.1} and
our assumption $\widehat{\varphi_1}, \widehat{\varphi_2} \ge 0$,
we have
\begin{align*}
&|\widehat{R}_{Q,Q'}(\varphi_1,\varphi_2)(y,\eta)|
\\
&=\left|\int_{\R^n}
\left(\cos(\eta\cdot\xi)-i\sin(\eta\cdot\xi)\right)
\widehat{\varphi_1}(y+(\xi/R_QR_{Q'}))\,
\overline{\widehat{\varphi_2}(\xi)}\, d\xi \right|
\\
&\ge \left|\int_{\R^n}
\cos(\eta\cdot\xi)\,
\widehat{\varphi_1}(y+(\xi/R_QR_{Q'}))\,
\widehat{\varphi_2}(\xi)\, d\xi \right|
\\
&=\left|\int_{|\xi| \le 1/4}
\cos(\eta\cdot\xi)\,
\widehat{\varphi_1}(y+(\xi/R_QR_{Q'}))\,
\widehat{\varphi_2}(\xi)\, d\xi \right|
\\
&\ge C \int_{|\xi| \le 1/4}
\widehat{\varphi_1}(y+(\xi/R_QR_{Q'}))\,
\widehat{\varphi_2}(\xi)\, d\xi
\ge C\int_{|\xi| \le 1/4}
\widehat{\varphi_2}(\xi)\, d\xi
=C\|\widehat{\varphi_2}\|_{L^1}
\end{align*}
for all $|y|,|\eta| \le 4$.
The proof is complete.
\end{proof}
Let $\varphi_1,\varphi_2$ be as in \eqref{(3.1)},
and set
\begin{equation}\label{(3.2)}
\varphi_{Q,Q'}(y,\eta)
=\widehat{R}_{Q,Q'}(\varphi_1,\varphi_2)
((y-d_Q)/R_Q,(\eta-d_{Q'})/R_{Q'}),
\end{equation}
where $d_Q,d_{Q'},R_Q,R_{Q'}$ are as in \eqref{(2.5)}.
We denote by $T_x$ and $M_\xi$
the operators of translation and modulation:
\[
T_xf(t)=f(t-x),
\qquad
M_{\xi}f(t)=e^{i\xi\cdot t}\, f(t),
\]
where $x,\xi,t \in \R^n$.
\begin{lem}\label{3.3}
Let $\Phi_{Q,Q'}=\calF_{1,2}^{-1}\varphi_{Q,Q'}$,
where $\varphi_{Q,Q}$ is defined by \eqref{(3.2)}.
Then
\[
\Phi_{Q,Q'}(x,\xi)
=R_Q^n
(M_{d_Q/R_Q}\varphi_1)(R_Qx)\,
\overline{\calF[(T_{d_{Q'}/R_{Q'}}\varphi_2)(\cdot/R_{Q'})](\xi)}
e^{-ix\cdot\xi}
\]
for all $Q,Q' \in \calQ$.
\end{lem}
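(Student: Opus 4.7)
The proof is a direct Fourier-analytic computation using the standard rules for how translation and dilation interact with $\mathcal{F}_{1,2}^{-1}$, together with Lemma \ref{3.1} (which identifies $\mathcal{F}_{1,2}^{-1}$ of $\widehat{R}_{Q,Q'}(\varphi_1,\varphi_2)$ as $R_{Q,Q'}(\varphi_1,\varphi_2)$).

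The plan is as follows. I introduce the intermediate function
\[
G(y,\eta)=\widehat{R}_{Q,Q'}(\varphi_1,\varphi_2)(y/R_Q,\eta/R_{Q'}),
\]
so that $\varphi_{Q,Q'}(y,\eta)=G(y-d_Q,\eta-d_{Q'})$. The translation rule for the inverse Fourier transform gives
\[
\mathcal{F}_{1,2}^{-1}\varphi_{Q,Q'}(x,\xi)
=e^{i(d_Q\cdot x+d_{Q'}\cdot\xi)}\,\mathcal{F}_{1,2}^{-1}G(x,\xi),
\]
and the dilation rule, combined with Lemma \ref{3.1}, yields
\[
\mathcal{F}_{1,2}^{-1}G(x,\xi)
=R_Q^n R_{Q'}^n\, R_{Q,Q'}(\varphi_1,\varphi_2)(R_Q x,R_{Q'}\xi).
\]
Inserting the definition of $R_{Q,Q'}$ and noting that the phase factor $e^{-i(R_Q x/R_Q)\cdot(R_{Q'}\xi/R_{Q'})}$ simplifies to $e^{-ix\cdot\xi}$, I obtain
\[
\Phi_{Q,Q'}(x,\xi)
=R_Q^n R_{Q'}^n\, e^{i d_Q\cdot x}\varphi_1(R_Q x)\,
\overline{\widehat{\varphi_2}(R_{Q'}\xi)}\, e^{i d_{Q'}\cdot\xi}\,
e^{-ix\cdot\xi}.
\]

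To finish, I repackage this formula in the language of $T_y$ and $M_\eta$. The factor $R_Q^n e^{id_Q\cdot x}\varphi_1(R_Q x)$ is exactly $R_Q^n(M_{d_Q/R_Q}\varphi_1)(R_Q x)$. For the $\xi$-factor, a change of variables $s=(t-d_{Q'})/R_{Q'}$ gives
\[
\mathcal{F}\bigl[(T_{d_{Q'}/R_{Q'}}\varphi_2)(\cdot/R_{Q'})\bigr](\xi)
=R_{Q'}^n e^{-i d_{Q'}\cdot\xi}\widehat{\varphi_2}(R_{Q'}\xi),
\]
so that its complex conjugate absorbs $R_{Q'}^n e^{i d_{Q'}\cdot\xi}\overline{\widehat{\varphi_2}(R_{Q'}\xi)}$ and yields the stated identity.

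There is no genuine obstacle here; the only thing to be careful about is tracking the signs and factors of $R_Q$, $R_{Q'}$ correctly in the dilation step, and in particular checking that the quadratic phase in $R_{Q,Q'}$ collapses to the plain $e^{-ix\cdot\xi}$ after the dilation. Once those bookkeeping items are in order, the identification with the operators $M_{d_Q/R_Q}$ and $T_{d_{Q'}/R_{Q'}}$ is immediate.
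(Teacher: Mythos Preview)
Your proof is correct and follows essentially the same computation as the paper's: apply the translation and dilation rules for $\calF_{1,2}^{-1}$, use Fourier inversion to recover $R_{Q,Q'}(\varphi_1,\varphi_2)$, observe that the phase collapses to $e^{-ix\cdot\xi}$, and then repackage the factors via $M_{d_Q/R_Q}$ and $T_{d_{Q'}/R_{Q'}}$. One small remark: the step $\calF_{1,2}^{-1}\widehat{R}_{Q,Q'}(\varphi_1,\varphi_2)=R_{Q,Q'}(\varphi_1,\varphi_2)$ is just Fourier inversion (since $\widehat{R}_{Q,Q'}$ is defined as $\calF_{1,2}R_{Q,Q'}$), not Lemma~\ref{3.1} itself.
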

\begin{proof}
A straightforward computation shows that
\begin{align*}
&\Phi_{Q,Q'}(x,\xi)
=\calF_{(y,\eta) \to (x,\xi)}^{-1}
\left[ \widehat{R}_{Q,Q'}(\varphi_1,\varphi_2)((y-d_Q)/R_Q,
(\eta-d_{Q'})/R_{Q'}) \right]
\\
&=R_Q^nR_{Q'}^n
e^{i(d_Q\cdot x+d_{Q'}\cdot\xi)}\,
\calF_{1,2}^{-1}\widehat{R}_{Q,Q'}
(\varphi_1,\varphi_2)(R_Q x,R_{Q'}\xi)
\\
&=R_Q^nR_{Q'}^n
e^{i(d_Q\cdot x+d_{Q'}\cdot\xi)}\,
R_{Q,Q'}(\varphi_1,\varphi_2)(R_Q x,R_{Q'}\xi)
\\
&=R_Q^nR_{Q'}^n
e^{i(d_Q\cdot x+d_{Q'}\cdot\xi)}\,
\varphi_1(R_Qx)\,
\overline{\widehat{\varphi_2}(R_{Q'}\xi)}\,
e^{-i(R_Qx/R_Q)\cdot(R_{Q'}\xi/R_{Q'})}
\\
&=R_Q^n(M_{d_Q/R_Q}\varphi_1)(R_Qx)\,
\overline{R_{Q'}^n\calF[T_{d_{Q'}/R_{Q'}}\varphi_2](R_{Q'}\xi)}\,
e^{-ix\cdot\xi}.
\end{align*}
This completes the proof.
\end{proof}
\begin{lem}\label{3.4}
Let $\Phi_{Q,Q'}=\calF_{1,2}^{-1}\varphi_{Q,Q'}$,
where $\varphi_{Q,Q}$ is defined by \eqref{(3.2)}.
Then there exists a constant $C>0$ such that
\[
\|\Phi_{Q,Q'}(X-y,D-\eta)\|_{\calI_1}
\le C|Q|^{1/2}|Q'|^{1/2}
\]
for all $Q,Q' \in \calQ$ and $y,\eta \in \R^n$.
\end{lem}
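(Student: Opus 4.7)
The plan is to recognize that Lemma \ref{3.3} exhibits $\Phi_{Q,Q'}$ as a Rihaczek distribution, which makes the corresponding pseudo-differential operator rank one, so that its $\calI_1$-norm reduces to a product of two $L^2$-norms that can be computed explicitly.

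\textbf{Step 1: Rihaczek structure.} Rewrite the conclusion of Lemma \ref{3.3} as
\[
\Phi_{Q,Q'}(x,\xi)=u_Q(x)\,\overline{\widehat{v}_{Q'}(\xi)}\,e^{-ix\cdot\xi}=R(u_Q,v_{Q'})(x,\xi),
\]
where
\[
u_Q(x)=R_Q^n(M_{d_Q/R_Q}\varphi_1)(R_Qx),\qquad
v_{Q'}(t)=(T_{d_{Q'}/R_{Q'}}\varphi_2)(t/R_{Q'}).
\]

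\textbf{Step 2: Rank-one reduction.} For any $f\in\calS(\R^n)$, a direct computation using Plancherel gives
\[
R(u,v)(X,D)f(x)=(2\pi)^{-n}u(x)\int_{\R^n}\overline{\widehat{v}(\xi)}\,\widehat{f}(\xi)\,d\xi=u(x)\,\langle f,v\rangle.
\]
Hence $R(u_Q,v_{Q'})(X,D)$ is the rank-one operator $f\mapsto u_Q\,\langle f,v_{Q'}\rangle$, and for such an operator the only nonzero singular value is $\|u_Q\|_{L^2}\|v_{Q'}\|_{L^2}$, so
\[
\|\Phi_{Q,Q'}(X,D)\|_{\calI_1}=\|u_Q\|_{L^2}\,\|v_{Q'}\|_{L^2}.
\]

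\textbf{Step 3: $L^2$-norms via dilation.} The modulation $M_{d_Q/R_Q}$ and translation $T_{d_{Q'}/R_{Q'}}$ are unitary, so only the $L^2$-dilations survive and a change of variables yields
\[
\|u_Q\|_{L^2}=R_Q^{n/2}\|\varphi_1\|_{L^2},\qquad
\|v_{Q'}\|_{L^2}=R_{Q'}^{n/2}\|\varphi_2\|_{L^2}.
\]
Combined with \eqref{(2.7)}, which gives $R_Q^n\asymp|Q|$ and $R_{Q'}^n\asymp|Q'|$, this produces the desired bound $\|\Phi_{Q,Q'}(X,D)\|_{\calI_1}\le C|Q|^{1/2}|Q'|^{1/2}$.

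\textbf{Step 4: Removing the shift.} Finally I reduce the general case to $y=\eta=0$ by the standard unitary equivalence
\[
\Phi_{Q,Q'}(X-y,D-\eta)=M_\eta T_y\,\Phi_{Q,Q'}(X,D)\,T_{-y}M_{-\eta},
\]
which follows from the substitutions $\xi\to\xi+\eta$ and $x\to x-y$ in the definition of the pseudo-differential operator. Since $T_y$ and $M_\eta$ are unitary on $L^2(\R^n)$ and the $\calI_1$-norm is invariant under conjugation by unitaries (cf.\ \eqref{(2.2)} applied twice), the bound obtained in Step 3 transfers verbatim to the shifted operator, uniformly in $y,\eta\in\R^n$. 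No part of the argument looks like a serious obstacle; the only conceptual point is the identification in Step 1, which is the very reason the modified Rihaczek distribution was introduced.
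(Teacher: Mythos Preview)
Your proof is correct and follows essentially the same approach as the paper: identify the operator as rank one via the Rihaczek structure of Lemma \ref{3.3}, then compute its trace norm as a product of two $L^2$-norms and invoke \eqref{(2.7)}. The only organizational difference is that the paper absorbs the shift $(y,\eta)$ directly into the translation and modulation factors during the explicit computation of $\Phi_{Q,Q'}(x-y,\xi-\eta)$, whereas you treat the unshifted case first and then pass to general $(y,\eta)$ via the unitary conjugation identity in Step~4; both routes are equivalent.
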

\begin{proof}
By Lemma \ref{3.3},
\begin{align*}
\Phi_{Q,Q'}(x-y,\xi-\eta)
&=R_Q^n
(M_{d_Q/R_Q}\varphi_1)(R_Q(x-y))
\\
&\qquad \times
\overline{\calF[(T_{d_{Q'}/R_{Q'}}\varphi_2)(\cdot/R_{Q'})](\xi-\eta)}
\, e^{-i(x-y)\cdot(\xi-\eta)}
\\
&=R_Q^n e^{i\eta\cdot x}\,
(T_{R_Qy}M_{d_Q/R_Q}\varphi_1)(R_Qx)
\\
&\qquad \times
\overline{e^{-iy\cdot\xi}\,
\calF[(M_{R_{Q'}\eta}T_{d_{Q'}/R_{Q'}}\varphi_2)(\cdot/R_{Q'})](\xi)}\,
e^{-i(x\cdot\xi+y\cdot\eta)}
\\
&=R_Q^n
(M_{\eta/R_Q}T_{R_Qy}M_{d_Q/R_Q}\varphi_1)(R_Qx)
\\
&\qquad \times
\overline{\calF[(T_{y/R_{Q'}}M_{R_{Q'}\eta}
T_{d_{Q'}/R_{Q'}}\varphi_2)(\cdot/R_{Q'})](\xi)}\,
e^{-i(x\cdot\xi+y\cdot\eta)}.
\end{align*}
Hence,
\begin{align*}
&\Phi_{Q,Q'}(X-y,D-\eta)f(x)
\\
&=e^{-iy\cdot\eta}\,
(2\pi)^{-n}\langle \widehat{f},
\calF[(T_{y/R_{Q'}}M_{R_{Q'}\eta}
T_{d_{Q'}/R_{Q'}}\varphi_2)(\cdot/R_{Q'})]\rangle
\\
&\qquad \times
R_Q^n(M_{\eta/R_Q}T_{R_Qy}M_{d_Q/R_Q}\varphi_1)(R_Qx)
\\
&=e^{-iy\cdot\eta}\,
\langle f, (T_{y/R_{Q'}}M_{R_{Q'}\eta}
T_{d_{Q'}/R_{Q'}}\varphi_2)(\cdot/R_{Q'})\rangle
\\
&\qquad \times
R_Q^n(M_{\eta/R_Q}T_{R_Qy}M_{d_Q/R_Q}\varphi_1)(R_Qx),
\end{align*}
and consequently
$\Phi_{Q,Q'}(X-y,D-\eta)$ is a rank one operator.
By \eqref{(2.7)} and Schwarz's inequality,
we have
\begin{align*}
\|\Phi_{Q,Q'}(X-y,D-\eta)f\|_{L^2}
&\le R_Q^n
\|(M_{\eta/R_Q}T_{R_Qy}M_{d_Q/R_Q}\varphi_1)(R_Q\cdot)\|_{L^2}
\\
&\qquad \times
\|(T_{y/R_{Q'}}M_{R_{Q'}\eta}
T_{d_{Q'}/R_{Q'}}\varphi_2)(\cdot/R_{Q'})\|_{L^2}\|f\|_{L^2}
\\
&=R_Q^{n/2}R_{Q'}^{n/2}
\|M_{\eta/R_Q}T_{R_Qy}M_{d_Q/R_Q}\varphi_1\|_{L^2}
\\
&\qquad \times
\|T_{y/R_{Q'}}M_{R_{Q'}\eta}T_{d_{Q'}/R_{Q'}}\varphi_2\|_{L^2}
\|f\|_{L^2}
\\
&=R_Q^{n/2}R_{Q'}^{n/2}
\|\varphi_1\|_{L^2}\|\varphi_2\|_{L^2}\|f\|_{L^2}
\\
&\le C|Q|^{1/2}|Q'|^{1/2}\|f\|_{L^2}
\end{align*}
for all $f \in \calS(\R^n)$,
$Q,Q' \in \calQ$ and $y,\eta \in \R^n$.
Therefore,
\[
\|\Phi_{Q,Q'}(X-y,D-\eta)\|_{\calI_1}
=\|\Phi_{Q,Q'}(X-y,D-\eta)\|_{\calL(L^2)}
\le C|Q|^{1/2}|Q'|^{1/2}
\]
for all $Q,Q' \in \calQ$ and $y,\eta \in \R^n$.
The proof is complete.
\end{proof}
We are now ready to prove Theorem \ref{1.1}.
\par
\medskip
\noindent
{\it Proof of Theorem \ref{1.1}.}
By \eqref{(2.13)},
\begin{equation}\label{(3.3)}
\|\sigma(X,D)\|_{\calI_1}
\le \sum_{Q,Q' \in \calQ}\|\psi_Q(D_x)\psi_{Q'}(D_{\xi})
\sigma(X,D)\|_{\calI_1},
\end{equation}
where $\calQ$ is an $\alpha$-covering of $\R^n$
with a corresponding BAPU
$\{\psi_Q\}_{Q \in \calQ} \subset \calS(\R^n)$.
Let $\gamma \in \calS(\R^n)$ be such that
$\gamma=1$ on $\{\xi : |\xi| \le 2\}$
and $\mathrm{supp}\, \gamma \subset \{\xi : |\xi| \le 4\}$,
and set
\[
\gamma_{Q,Q'}(y,\eta)
=\gamma((y-d_Q)/R_Q)\, \gamma((\eta-d_{Q'})/R_{Q'}),
\]
where $d_Q,d_{Q'},R_Q,R_{Q'}$ are as in \eqref{(2.5)}.
Recall that $\mathrm{supp}\, \psi_Q \subset Q$ for all $Q \in \calQ$
(see the definition of BAPU).
Since $\gamma_{Q,Q'}(y,\eta)=1$
on $\{(y,\eta) : |y-d_Q| \le 2R_Q, \ |\eta-d_{Q'}|\le 2R_{Q'}\}$,
we have by \eqref{(2.5)}
\begin{equation}\label{(3.4)}
\psi_Q(y)\, \psi_{Q'}(\eta)
=\gamma_{Q,Q'}(y,\eta)\,
\psi_Q(y)\, \psi_{Q'}(\eta)
\end{equation}
for all $Q,Q' \in \calQ$ and $y,\eta \in \R^n$.
On the other hand,
since
$\mathrm{supp}\, \gamma\otimes\gamma
\subset \{(y,\eta) : |y| \le 4, \ |\eta| \le 4\}$,
we have by Lemma \ref{3.2} (2)
\[
\gamma(y)\, \gamma(\eta)
=\widehat{R}_{Q,Q'}(\varphi_1,\varphi_2)(y,\eta)\,
\frac{\gamma(y)\, \gamma(\eta)}
{\widehat{R}_{Q,Q'}(\varphi_1,\varphi_2)(y,\eta)}
\]
for all $y,\eta \in \R^n$,
where $\varphi_1,\varphi_2$ are as in \eqref{(3.1)}.
This implies
\begin{equation}\label{(3.5)}
\begin{split}
\gamma_{Q,Q'}(y,\eta)
&=\gamma((y-d_Q)/R_Q)\, \gamma((\eta-d_{Q'})/R_{Q'})
\\
&=\widehat{R}_{Q,Q'}(\varphi_1,\varphi_2)
((y-d_Q)/R_Q,(\eta-d_{Q'})/R_{Q'})
\\
&\qquad \times
\frac{\gamma((y-d_Q)/R_Q)\, \gamma((\eta-d_{Q'})/R_{Q'})}
{\widehat{R}_{Q,Q'}(\varphi_1,\varphi_2)
((y-d_Q)/R_Q,(\eta-d_{Q'})/R_{Q'})}
\\
&=\varphi_{Q,Q'}(y,\eta)\,
\frac{\gamma_{Q,Q'}(y,\eta)}{\varphi_{Q,Q'}(y,\eta)}
\end{split}
\end{equation}
for all $Q,Q' \in \calQ$ and $y,\eta \in \R^n$,
where $\varphi_{Q,Q'}$ is defined by \eqref{(3.2)}.
Combining \eqref{(3.4)} and \eqref{(3.5)},
we see that
\[
\psi_Q(y)\, \psi_{Q'}(\eta)
=\varphi_{Q,Q'}(y,\eta)\,
\frac{\gamma_{Q,Q'}(y,\eta)}{\varphi_{Q,Q'}(y,\eta)}\,
\psi_Q(y)\, \psi_{Q'}(\eta)
\]
for all $Q,Q' \in \calQ$ and $y,\eta \in \R^n$.
Then
\begin{equation}\label{(3.6)}
\begin{split}
&\psi_Q(D_x)\psi_{Q'}(D_{\xi})\sigma(x,\xi)
\\
&=\int_{\R^{2n}}
\Phi_{Q,Q'}(x-y,\xi-\eta)
\left[ \calF_{1,2}^{-1}
\left( \frac{\gamma_{Q,Q'}}{\varphi_{Q,Q'}}\right)\right]
*[\psi_Q(D_x)\psi_{Q'}(D_{\xi})\sigma]
(y,\eta)\, dy\, d\eta,
\end{split}
\end{equation}
where $\Phi_{Q,Q'}=\calF_{1,2}^{-1}\varphi_{Q,Q'}$.
We note that
\begin{equation}\label{(3.7)}
\sup_{Q,Q' \in \calQ}
\left\|\calF_{1,2}^{-1}\left(
\frac{\gamma_{Q,Q'}}{\varphi_{Q,Q'}}
\right)\right\|_{L^1(\R^n\times\R^n)}
<\infty.
\end{equation}
In fact, by Lemma \ref{3.2},
\[
\sup_{y,\eta \in \R^n}
\left| \partial_{y}^{\alpha}\partial_{\eta}^{\beta}
\left( \frac{\gamma(y)\, \gamma(\eta)}
{\widehat{R}_{Q,Q'}(\varphi_1,\varphi_2)(y,\eta)}\right) \right|
\le C_{\alpha,\beta}
\qquad \text{for all $Q,Q' \in \calQ$},
\]
where $|\alpha+\beta| \le 2n+1$.
Hence,
using
$\mathrm{supp}\, \gamma\otimes\gamma/\widehat{R}_{Q,Q'}
(\varphi_1,\varphi_2)
\subset \{(y,\eta) : |y| \le 4, \ |\eta| \le 4\}$
and integration by parts,
we have
\begin{equation}\label{(3.8)}
\sup_{Q,Q' \in \calQ}\left\|\calF_{1,2}^{-1}\left(
\frac{\gamma\otimes\gamma}{\widehat{R}_{Q,Q'}(\varphi_1,\varphi_2)}
\right)\right\|_{L^1(\R^n\times\R^n)}<\infty.
\end{equation}
On the other hand,
by a change of variables,
we see that
\begin{equation}\label{(3.9)}
\begin{split}
&\left\|\calF_{1,2}^{-1}\left(
\frac{\gamma_{Q,Q'}}{\varphi_{Q,Q'}}
\right)\right\|_{L^1(\R^n\times\R^n)}
\\
&=R_Q^nR_{Q'}^n\left\|\left[\calF_{1,2}^{-1}\left(
\frac{\gamma\otimes\gamma}{\widehat{R}_{Q,Q'}(\varphi_1,\varphi_2)}
\right)\right](R_Qx,R_{Q'}\xi)
\right\|_{L^1(\R^n\times\R^n)}
\\
&=\left\|\calF_{1,2}^{-1}\left(
\frac{\gamma\otimes\gamma}{\widehat{R}_{Q,Q'}(\varphi_1,\varphi_2)}
\right)\right\|_{L^1(\R^n\times\R^n)}.
\end{split}
\end{equation}
Combining \eqref{(3.8)} and \eqref{(3.9)},
we obtain \eqref{(3.7)}.
Recall that $\langle x_Q \rangle^{\alpha n} \asymp |Q|$
and $\langle \xi_{Q'} \rangle^{\alpha n} \asymp |Q'|$
for all $Q,Q' \in \calQ$,
where $x_Q \in Q$ and $\xi_{Q'} \in Q'$
(see the definition of an $\alpha$-covering).
By \eqref{(3.6)}, \eqref{(3.7)} and Lemma \ref{3.4},
we see that
\begin{align*}
&\|\psi_Q(D_x)\psi_{Q'}(D_{\xi})\sigma(X,D)\|_{\calI_1}
\\
&\le\int_{\R^{2n}}
\|\Phi_{Q,Q'}(X-y,D-\eta)\|_{\calI_1}
\left|\calF_{1,2}^{-1}
\left( \frac{\gamma_{Q,Q'}}{\varphi_{Q,Q'}}\right)
*[\psi_Q(D_x)\psi_{Q'}(D_{\xi})\sigma]
(y,\eta)\right| dy\, d\eta
\\
&\le C|Q|^{1/2}|Q'|^{1/2}
\left\|\calF_{1,2}^{-1}\left(
\frac{\gamma_{Q,Q'}}{\varphi_{Q,Q'}}
\right)\right\|_{L^1(\R^n\times\R^n)}
\|\psi_Q(D_x)\psi_{Q'}(D_{\xi})\sigma\|_{L^1(\R^n\times\R^n)}
\\
&\le C\langle x_Q \rangle^{\alpha n/2}
\langle \xi_{Q'} \rangle^{\alpha n/2}
\|\psi_Q(D_x)\psi_{Q'}(D_{\xi})\sigma\|_{L^1(\R^n\times\R^n)}
\end{align*}
for all $Q,Q' \in \calQ$.
Therefore,
by \eqref{(3.3)},
we have
\[
\|\sigma(X,D)\|_{\calI_1}
\le C\sum_{Q,Q' \in \calQ}
\langle x_Q \rangle^{\alpha n/2}
\langle \xi_{Q'} \rangle^{\alpha n/2}
\|\psi_Q(D_x)\psi_{Q'}(D_{\xi})\sigma\|_{L^1(\R^n\times\R^n)},
\]
where $C$ is independent of $\sigma$.
The proof is complete.

\section{Trace property of commutators}\label{section4}
In this section,
we prove Theorem \ref{1.2}.
We recall the definition of commutators.
Let $a$ be a Lipschitz function on $\R^n$,
that is,
\begin{equation}\label{(4.1)}
|a(x)-a(y)| \le A|x-y|
\qquad \text{for all $x,y \in \R^n$}.
\end{equation}
Note that $a$ satisfies \eqref{(4.1)} if and only if
$a$ is differentiable
(in the ordinary sense)
and $\partial^{\beta}a \in L^{\infty}(\R^n)$
for $|\beta|=1$
(see \cite[Chapter 8, Theorem 3]{Stein}).
If $T$ is a bounded linear operator on $L^2(\R^n)$,
then $T(af)$ and $a(Tf)$ make sense
as elements in $L_{\mathrm{loc}}^2(\R^n)$
when $f \in \calS(\R^n)$,
since $|a(x)| \le C(1+|x|)$ for some constant $C>0$.
Hence, the commutator $[T,a]$ can be defined by
\[
[T,a]f(x)=T(af)(x)-a(x)Tf(x)
\qquad \text{for $f \in \calS(\R^n)$},
\]
where $T$ is a bounded linear operator on $L^2(\R^n)$.
In order to prove Theorem \ref{1.2},
we prepare the following lemmas:
\begin{lem}[{\cite[Lemma 4.1]{K-S-T}}]\label{4.1}
Let $T$ be a bounded linear operator on $L^2(\R^n)$,
and $a$ be a Lipschitz function on $\R^n$
with $\| \nabla a\|_{L^{\infty}} \neq 0$.
Then there exist $\epsilon(a)>0$ and
$\{a_{\epsilon}\}_{0<\epsilon<\epsilon(a)} \subset \calS(\R^n)$
such that
\begin{enumerate}
\item
$\langle [T,a]f,g\rangle
=\lim_{\epsilon \to 0}
\langle [T,a_{\epsilon}]f,g\rangle$
for all $f,g \in \calS(\R^n)$,
\item
$\|\nabla a_{\epsilon}\|_{L^{\infty}}
\le C\|\nabla a\|_{L^{\infty}}$
for all $0<\epsilon<\epsilon(a)$,
\end{enumerate}
where $\nabla a=(\partial_1 a, \dots, \partial_n a)$,
and $C$ is independent of $T$ and $a$.
\end{lem}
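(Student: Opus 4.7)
The proof plan is to approximate $a$ by Schwartz functions via truncation and mollification. First, since $[T,c]=0$ for any constant $c$, I may replace $a$ by $a-a(0)$ without changing the commutator, and thereby assume $a(0)=0$; then the Lipschitz hypothesis gives $|a(x)|\le A|x|$, where $A=\|\nabla a\|_{L^{\infty}}$. Choose a nonnegative $\phi\in C_c^\infty(\R^n)$ with $\int\phi=1$, set $\phi_\epsilon(x)=\epsilon^{-n}\phi(x/\epsilon)$, and fix $\chi\in C_c^\infty(\R^n)$ with $\chi=1$ on $\{|x|\le 1\}$ and $\mathrm{supp}\,\chi\subset\{|x|\le 2\}$. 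I would then set
\[
a_\epsilon(x)=\chi(\epsilon x)\,(\phi_\epsilon*a)(x),\qquad 0<\epsilon<\epsilon(a):=1.
\]
Each $a_\epsilon$ is smooth and compactly supported, hence belongs to $\calS(\R^n)$.

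For the gradient bound in (2), the Leibniz rule yields
\[
\nabla a_\epsilon(x)=\epsilon(\nabla\chi)(\epsilon x)(\phi_\epsilon*a)(x)+\chi(\epsilon x)(\phi_\epsilon*\nabla a)(x).
\]
The second summand is bounded in $L^{\infty}$ by $\|\chi\|_{L^{\infty}}\|\nabla a\|_{L^{\infty}}$ directly. For the first, the factor $(\nabla\chi)(\epsilon x)$ restricts attention to $|x|\le 2/\epsilon$, and on this set the linear growth estimate $|(\phi_\epsilon*a)(x)|\le A(|x|+C_\phi\epsilon)\le 2A/\epsilon+AC_\phi$ combined with the prefactor $\epsilon$ yields a bound of the order of $\|\nabla a\|_{L^{\infty}}$. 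This step is the main technical obstacle: without the preliminary normalization $a(0)=0$, a spurious term of the form $\epsilon\,|a(0)|\,\|\nabla\chi\|_{L^{\infty}}$ would appear and could not be absorbed into $C\|\nabla a\|_{L^{\infty}}$, so the cancellation between the $\epsilon$-prefactor and the $1/\epsilon$-support of $(\nabla\chi)(\epsilon\,\cdot)$ is what makes the constant $a$-independent.

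For the weak convergence in (1), I would split
\[
\langle[T,a_\epsilon]f,g\rangle-\langle[T,a]f,g\rangle=\langle T((a_\epsilon-a)f),g\rangle-\langle(a_\epsilon-a)Tf,g\rangle,
\]
and handle both pieces by dominated convergence. The pointwise limit $a_\epsilon(x)\to a(x)$ is standard (once $\epsilon|x|\le1$, $a_\epsilon(x)$ reduces to $(\phi_\epsilon*a)(x)$ and mollifier convergence applies), and the same argument used for (2) yields the uniform envelope $|a_\epsilon(x)-a(x)|\le CA(1+|x|)$. Combined with the rapid decay of $f,g\in\calS(\R^n)$, this forces $(a_\epsilon-a)f\to0$ and $(a_\epsilon-a)g\to0$ in $L^2(\R^n)$. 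The first summand then vanishes by the $L^2$-boundedness of $T$, and the second by applying Cauchy--Schwarz to $\int(a_\epsilon-a)(x)\,Tf(x)\,\overline{g(x)}\,dx$, using $Tf\in L^2$ together with $\|(a_\epsilon-a)g\|_{L^2}\to 0$.
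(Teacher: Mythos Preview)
Your proof is correct and follows essentially the same approach as the paper: the same mollify-then-truncate construction $a_\epsilon(x)=\chi(\epsilon x)(\phi_\epsilon*a)(x)$, the same Leibniz-rule estimate for (2), and the same dominated-convergence argument for (1). The one difference is in how you neutralize the term $\epsilon|a(0)|\|\nabla\chi\|_{L^\infty}$: the paper absorbs it by taking $\epsilon(a)=\min\{\|\nabla a\|_{L^\infty}/|a(0)|,1\}$, whereas you first subtract $a(0)$ (harmless for the commutator) and thereby obtain $\epsilon(a)=1$ uniformly---a slightly cleaner variant of the same idea.
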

We give the proof of Lemma \ref{4.1} in Appendix B
for reader's convenience.
\begin{lem}\label{4.2}
Let $\sigma(x,\xi) \in L^1(\R^n\times\R^n)$
and $\gamma \in \calS(\R^n)$ be such that
$\mathrm{supp}\, \widehat{\sigma_x} \subset B(\zeta,R)$
for all $x \in \R^n$
and $\mathrm{supp}\, \widehat{\gamma} \subset B(0,1)$,
where $\sigma_x(\xi)=\sigma(x,\xi)$,
$\widehat{\sigma_x}(\eta)=\calF_2\sigma(x,\eta)$,
$\zeta \in \R^n$ and $R>0$.
Then there exists a constant $C>0$ such that
\begin{align*}
\int_{\R^{2n}}\left| \int_{\R^n}e^{ix\cdot\eta}\,
\sigma(x,\xi+t\eta)\, \gamma(\eta)\, \widehat{f}(\eta)\, d\eta
\right| dx\, d\xi
\le C(1+R)^{n/2}
\|\sigma\|_{L^1(\R^n\times\R^n)}\|f\|_{L^{\infty}}
\end{align*}
for all $f \in \calS(\R^n)$ and $0<t<1$,
where $C$ is independent of $\sigma$, $\zeta \in \R^n$ and $R>0$.
\end{lem}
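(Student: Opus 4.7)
The plan is to exhibit the inner integral $G(x,\xi):=\int e^{ix\cdot\eta}\sigma(x,\xi+t\eta)\gamma(\eta)\widehat f(\eta)\,d\eta$ as a convolution in $\xi$ against a kernel whose $L^1$-norm is bounded by $C(1+R)^{n/2}\|f\|_{L^\infty}$; the estimate will then follow from Young's inequality and Fubini. The first step is Fourier inversion on $\sigma(x,\xi+t\eta)$, using $\mathrm{supp}\,\widehat{\sigma_x}\subset B(\zeta,R)$, which after a Fubini swap collapses the $\eta$-integral via $\int e^{i(x+ty)\cdot\eta}\gamma(\eta)\widehat f(\eta)\,d\eta=(2\pi)^n h(x+ty)$, where $h:=\gamma(D)f=\calF^{-1}\gamma * f$. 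This yields the representation
\[
G(x,\xi)=\int e^{i\xi\cdot y}\widehat{\sigma_x}(y)\,h(x+ty)\,dy.
\]
Since $\mathrm{supp}\,\widehat\gamma\subset B(0,1)$, the convolution kernel $\calF^{-1}\gamma$ is Schwartz with compact support, so $\|\partial^\alpha h\|_{L^\infty}\le C_\alpha\|f\|_{L^\infty}$ for every multi-index $\alpha$.

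Next, insert a smooth cutoff $\tilde\chi$ with $\tilde\chi\equiv 1$ on $B(\zeta,R)$, $\mathrm{supp}\,\tilde\chi\subset B(\zeta,R+1)$, and $\|\partial^\alpha\tilde\chi\|_{L^\infty}$ bounded independently of $R$ and $\zeta$ (take $\tilde\chi(y):=\chi_0(y-\zeta)$ for a fixed bump $\chi_0$). Since $\tilde\chi\widehat{\sigma_x}=\widehat{\sigma_x}$, re-expanding $\widehat{\sigma_x}(y)=\int e^{-iy\cdot\xi'}\sigma(x,\xi')\,d\xi'$ and swapping integrals gives
\[
G(x,\xi)=\bigl(\sigma(x,\cdot)*\widetilde L(x,\cdot)\bigr)(\xi),\qquad \widetilde L(x,u):=\int e^{iu\cdot y}\tilde\chi(y)h(x+ty)\,dy,
\]
so Young's inequality yields $\|G(x,\cdot)\|_{L^1(\xi)}\le\|\sigma(x,\cdot)\|_{L^1}\|\widetilde L(x,\cdot)\|_{L^1(u)}$. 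Integrating in $x$ then reduces the lemma to the uniform estimate $\|\widetilde L(x,\cdot)\|_{L^1(u)}\le C(1+R)^{n/2}\|f\|_{L^\infty}$.

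To establish this last bound, note that $\widetilde L(x,\cdot)$ is (up to reflection) the Fourier transform of $K_x(y):=\tilde\chi(y)h(x+ty)$, a smooth function supported in $B(\zeta,R+1)$. Combining the derivative bounds on $\tilde\chi$ and $h$ and using $0<t<1$ gives $|\partial^\alpha K_x(y)|\le C_\alpha\|f\|_{L^\infty}\mathbf{1}_{B(\zeta,R+1)}(y)$, whence $\|K_x\|_{H^N}\le C_N(1+R)^{n/2}\|f\|_{L^\infty}$, the factor $(1+R)^{n/2}$ arising from the $L^2$-norm of the indicator of $B(\zeta,R+1)$. Applying Cauchy-Schwarz with weight $\langle u\rangle^{-N}$ for any fixed $N>n/2$ then gives the Sobolev-type bound $\|\widetilde L(x,\cdot)\|_{L^1(u)}\le C_N\|K_x\|_{H^N}\le C(1+R)^{n/2}\|f\|_{L^\infty}$, as needed. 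The main technical obstacle is arranging the cutoff so that all constants — in particular the derivative bounds on $\tilde\chi$ and $h$ — remain independent of $R$, $\zeta$, and $t$; once this is done, the remainder is a routine application of Plancherel together with Young's and Cauchy-Schwarz inequalities.
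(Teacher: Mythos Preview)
Your argument is correct, modulo one small slip: the parenthetical ``take $\tilde\chi(y):=\chi_0(y-\zeta)$ for a fixed bump $\chi_0$'' cannot work as written, since $\chi_0$ must be identically $1$ on $B(0,R)$ and hence varies with $R$. What you actually need (and state correctly) is only that $\|\partial^\alpha\tilde\chi\|_{L^\infty}$ be bounded independently of $R$ and $\zeta$; this is achieved by the standard mollifier construction $\tilde\chi=\psi*\mathbf{1}_{B(\zeta,R+1/2)}$ with a fixed $\psi\in C_c^\infty$, $\psi\ge0$, $\int\psi=1$, $\mathrm{supp}\,\psi\subset B(0,1/2)$. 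With that correction the rest of the proof goes through.

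Your route is genuinely different from the paper's. The paper never passes through a $\xi$-convolution or a Sobolev embedding. Instead it observes that $\calF_{\eta\to\eta'}\bigl[\sigma(x,\xi+t\eta)\gamma(\eta)\bigr]$ is supported in $B(t\zeta,1+tR)$, and uses Parseval twice to rewrite the inner integral as $(2\pi)^n\int\sigma(x,\xi+t\eta)\gamma(\eta)\,\calF^{-1}\bigl[\chi_{B(t\zeta,1+tR)}\,T_{-x}f\bigr](-\eta)\,d\eta$. After Fubini (the $\xi$-integral collapses to $\int|\sigma(x,\xi)|\,d\xi$ by translation invariance) and Cauchy--Schwarz in $\eta$, one is left with $\|\gamma\|_{L^2}\,\|\chi_{B(t\zeta,1+tR)}\,T_{-x}f\|_{L^2}\le C(1+R)^{n/2}\|f\|_{L^\infty}$, the volume factor coming directly from the characteristic function. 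This is shorter and avoids both the auxiliary cutoff $\tilde\chi$ and the embedding $H^N\hookrightarrow\calF L^1$. Your approach, on the other hand, makes the convolution structure in $\xi$ completely explicit and isolates the kernel $\widetilde L(x,\cdot)$ whose $L^1$-norm carries the $(1+R)^{n/2}$ factor; this is more hands-on and arguably more transparent about where the growth in $R$ originates, at the cost of a bit more machinery.
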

\begin{proof}
Since
$\mathrm{supp}\, \calF_{\eta \to \eta'}
\left[\sigma(x,\xi+t\eta)\right] \subset tB(\zeta,R)$
and $\mathrm{supp}\, \widehat{\gamma} \subset B(0,1)$, we have
\[
\mathrm{supp}\, \calF_{\eta \to \eta'}
\left[\sigma(x,\xi+t\eta)\, \gamma(\eta)\right]
\subset tB(\zeta,R)+B(0,1)=B(t\zeta, 1+tR)
\]
for all $x,\xi \in \R^n$ and $0<t<1$,
where $tB(\zeta,R)=\{t\eta' : \eta' \in B(\zeta,R)\}$.
Hence, by Plancherel's theorem,
\begin{align*}
&\int_{\R^n}e^{ix\cdot\eta}\,
\sigma(x,\xi+t\eta)\, \gamma(\eta)\, \widehat{f}(\eta)\, d\eta
\\
&=\int_{\R^n}
\left(\int_{\R^n}e^{-iy\cdot\eta}\,
\sigma(x,\xi+t\eta)\, \gamma(\eta)\, d\eta\right)
f(x+y)\, dy
\\
&=\int_{\R^n}
\calF_{\eta \to y}
[\sigma(x,\xi+t\eta)\, \gamma(\eta)]\,
\chi_{B(t\zeta, 1+tR)}(y)\, (T_{-x}f)(y)\, dy
\\
&=(2\pi)^n\int_{\R^n}
\sigma(x,\xi+t\eta)\, \gamma(\eta)\,
\calF^{-1}[\chi_{B(t\zeta, 1+tR)}\, (T_{-x}f)](-\eta)\, d\eta
\end{align*}
for all $x,\xi \in \R^n$ and $0<t<1$,
where $\chi_{B(t\zeta, 1+tR)}$
is the characteristic function of $B(t\zeta, 1+tR)$.
Therefore,
by Fubini's theorem, Schwarz's inequality
and Plancherel's theorem, we have
\begin{align*}
&\int_{\R^{2n}}\left| \int_{\R^n}e^{ix\cdot\eta}\,
\sigma(x,\xi+t\eta)\, \gamma(\eta)\, \widehat{f}(\eta)\, d\eta
\right| dx\, d\xi
\\
&\le (2\pi)^n \int_{\R^n}\int_{\R^n}\left( \int_{\R^n}
|\sigma(x,\xi+t\eta)|\, d\xi\right)
|\gamma(\eta)\,
\calF^{-1}[\chi_{B(t\zeta, 1+tR)}\, (T_{-x}f)](\eta)|\, d\eta\, dx
\\
&=(2\pi)^n \int_{\R^n}\int_{\R^n}\left( \int_{\R^n}
|\sigma(x,\xi)|\, d\xi\right) |\gamma(\eta)\,
\calF^{-1}[\chi_{B(t\zeta, 1+tR)}\, (T_{-x}f)](\eta)|\, d\eta\, dx
\\
&=(2\pi)^n \int_{\R^n}\int_{\R^n}|\sigma(x,\xi)|
\left( \int_{\R^n}
|\gamma(\eta)\,
\calF^{-1}[\chi_{B(t\zeta, 1+tR)}\, (T_{-x}f)](\eta)|\, d\eta \right)
dx\, d\xi
\\
&\le (2\pi)^n \int_{\R^{2n}}|\sigma(x,\xi)|
\left( \|\gamma\|_{L^2}
\|\calF^{-1}[\chi_{B(t\zeta, 1+tR)}\, (T_{-x}f)]\|_{L^2}\right)
dx\, d\xi
\\
&=(2\pi)^{n/2}\int_{\R^{2n}}|\sigma(x,\xi)|
\left(
\|\gamma\|_{L^2}
\|\chi_{B(t\zeta, 1+tR)}\, (T_{-x}f)\|_{L^2}\right)
dx\, d\xi
\\
&\le (2\pi)^{n/2}\|\gamma\|_{L^2}
|B(t\zeta, 1+tR)|^{1/2}\|\sigma\|_{L^1(\R^n\times\R^n)}
\|f\|_{L^{\infty}}
\\
&=C(1+tR)^{n/2}
\|\sigma\|_{L^1(\R^n\times\R^n)}\|f\|_{L^{\infty}}
\le C(1+R)^{n/2}
\|\sigma\|_{L^1(\R^n\times\R^n)}\|f\|_{L^{\infty}}
\end{align*}
for all $f \in \calS(\R^n)$ and $0<t< 1$.
The proof is complete.
\end{proof}
\begin{lem}\label{4.3}
Let $0 \le \alpha \le 1$ and
$\calQ$ be an $\alpha$-covering of $\R^n$
with a corresponding BAPU
$\{\psi_Q\}_{Q \in \calQ} \subset \calS(\R^n)$.
Then, for every $\beta \in \Z_+^n$
there exists a constant $C_{\beta}>0$ such that
\[
\|\partial^{\beta}(\calF^{-1}\psi_Q)\|_{L^1}
\le C_{\beta}\langle \xi_Q \rangle^{|\beta|}
\qquad \text{for all $\xi_Q \in Q$ and $Q \in \calQ$}.
\]
\end{lem}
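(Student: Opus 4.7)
The plan is to exploit the identity
\[
\partial^{\beta}(\calF^{-1}\psi_Q)(x)
=\calF^{-1}\!\left[(i\xi)^{\beta}\psi_Q\right](x),
\]
combined with the BAPU property $\sup_{Q\in\calQ}\|\calF^{-1}\psi_Q\|_{L^1}<\infty$ and a rescaling argument tailored to the geometry of $Q$.

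First I would introduce an auxiliary cutoff. Fix $\chi\in C_c^{\infty}(\R^n)$ with $\chi=1$ on $\overline{B(0,2)}$ and $\mathrm{supp}\,\chi\subset B(0,3)$, and set $\chi_Q(\xi)=\chi((\xi-d_Q)/R_Q)$, where $d_Q$ and $R_Q$ are as in \eqref{(2.5)}. Then $\chi_Q\equiv 1$ on $B(d_Q,2R_Q)\supset Q\supset\mathrm{supp}\,\psi_Q$, so $\xi^{\beta}\psi_Q(\xi)=\bigl(\xi^{\beta}\chi_Q(\xi)\bigr)\,\psi_Q(\xi)$. Applying the convolution theorem and Young's inequality yields
\[
\|\partial^{\beta}(\calF^{-1}\psi_Q)\|_{L^1}
=\|\calF^{-1}[\xi^{\beta}\chi_Q]\ast\calF^{-1}\psi_Q\|_{L^1}
\le\|\calF^{-1}[\xi^{\beta}\chi_Q]\|_{L^1}\,\|\calF^{-1}\psi_Q\|_{L^1},
\]
which by the BAPU property reduces the problem to estimating $\|\calF^{-1}[\xi^{\beta}\chi_Q]\|_{L^1}$ uniformly in $Q$.

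Next I would perform the affine change of variable $\xi=d_Q+R_Q\eta$. A direct computation shows
\[
\calF^{-1}[\xi^{\beta}\chi_Q](x)
=R_Q^{n}\,e^{i d_Q\cdot x}\,\calF^{-1}\bigl[(d_Q+R_Q\,\cdot)^{\beta}\chi\bigr](R_Q x),
\]
so $\|\calF^{-1}[\xi^{\beta}\chi_Q]\|_{L^1}=\bigl\|\calF^{-1}\bigl[(d_Q+R_Q\,\cdot)^{\beta}\chi\bigr]\bigr\|_{L^1}$ after a further substitution $x\mapsto x/R_Q$ which cancels the Jacobian. Expanding the binomial gives
\[
\bigl\|\calF^{-1}\bigl[(d_Q+R_Q\,\cdot)^{\beta}\chi\bigr]\bigr\|_{L^1}
\le\sum_{\gamma\le\beta}\binom{\beta}{\gamma}|d_Q|^{|\beta-\gamma|}\,R_Q^{|\gamma|}\,
\|\calF^{-1}[\eta^{\gamma}\chi]\|_{L^1},
\]
where each factor $\|\calF^{-1}[\eta^{\gamma}\chi]\|_{L^1}$ is a constant depending only on $\chi$ and $\gamma$ (since $\eta^{\gamma}\chi\in C_c^{\infty}$).

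Finally I would estimate the geometric factors using the assumptions on the $\alpha$-covering. Since $\xi_Q\in Q\subset B(d_Q,2R_Q)$, we have $|d_Q|\le|\xi_Q|+2R_Q\lesssim\langle\xi_Q\rangle$ (using $R_Q\asymp\langle\xi_Q\rangle^{\alpha}\lesssim\langle\xi_Q\rangle$ from \eqref{(2.7)} and the definition of $\alpha$-covering). Therefore, using $\alpha\le 1$,
\[
|d_Q|^{|\beta-\gamma|}\,R_Q^{|\gamma|}
\lesssim\langle\xi_Q\rangle^{|\beta-\gamma|}\,\langle\xi_Q\rangle^{\alpha|\gamma|}
\le\langle\xi_Q\rangle^{|\beta|}.
\]
Summing over $\gamma\le\beta$ and assembling the inequalities produces the desired bound $\|\partial^{\beta}(\calF^{-1}\psi_Q)\|_{L^1}\le C_{\beta}\langle\xi_Q\rangle^{|\beta|}$, uniformly in $Q\in\calQ$. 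The only genuinely delicate step is the rescaling in the third paragraph: it is the device that converts the pointwise growth $|\xi|^{|\beta|}\lesssim\langle\xi_Q\rangle^{|\beta|}$ on $\mathrm{supp}\,\psi_Q$ into an $L^1$-norm estimate with a constant independent of $Q$, and the use of $\alpha\le 1$ is essential so that the powers of $R_Q$ do not exceed the powers of $\langle\xi_Q\rangle$.
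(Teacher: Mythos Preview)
Your argument is correct and essentially identical to the paper's: both introduce an auxiliary cutoff $\varphi_Q$ (your $\chi_Q$) equal to $1$ on $B(d_Q,2R_Q)\supset\mathrm{supp}\,\psi_Q$, transfer the derivative to $\calF^{-1}\varphi_Q$ via Young's inequality, and then use the affine rescaling $\xi\mapsto d_Q+R_Q\eta$ together with $|d_Q|\lesssim\langle\xi_Q\rangle$ and $R_Q\asymp\langle\xi_Q\rangle^{\alpha}$ to control the resulting sum. The only cosmetic difference is that the paper differentiates the convolution kernel $e^{id_Q\cdot x}R_Q^n\Phi(R_Qx)$ on the physical side (Leibniz), while you multiply by $\xi^{\beta}$ on the Fourier side and expand the binomial; these are the same computation.
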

\begin{proof}
Let $\varphi \in \calS(\R^n)$ be such that
$\varphi=1$ on $B(0,2)$,
and set $\varphi_Q(\xi)=\varphi((\xi-d_Q)/R_Q)$,
where $Q \in \calQ$ and $d_Q,R_Q$ are as in \eqref{(2.5)}.
Since $\varphi_Q=1$ on $B(d_Q, 2R_Q)$
and $\mathrm{supp}\, \psi_Q \subset Q \subset B(d_Q,2R_Q)$,
we see that
\[
\calF^{-1}\psi_Q(x)
=\calF^{-1}[\varphi_Q\, \psi_Q](x)
=\int_{\R^n}e^{id_Q\cdot(x-y)}\,
R_Q^n\, \Phi(R_Q(x-y))\, (\calF^{-1}\psi_Q)(y)\, dy
\]
for all $Q \in \calQ$, where $\Phi=\calF^{-1}\varphi$.
Hence,
\begin{align*}
&\partial^{\beta}\calF^{-1}\psi_Q(x)
\\
&=\sum_{\beta_1+\beta_2=\beta}C_{\beta_1,\beta_2}
\int_{\R^n}d_Q^{\beta_1}\,
e^{id_Q\cdot(x-y)}\,
R_Q^{n+|\beta_2|}\,
(\partial^{\beta_2}\Phi)(R_Q(x-y))\, (\calF^{-1}\psi_Q)(y)\, dy
\end{align*}
for all $Q \in \calQ$.
Since $R_Q \asymp |Q|^{1/n} \asymp \langle \xi_Q \rangle^{\alpha}$
(see \eqref{(2.7)})
and $\xi_Q \in B(d_Q, 2R_Q)$,
\[
|d_Q| \le |d_Q-\xi_Q|+|\xi_Q|
\le 2R_Q+\langle \xi_Q \rangle \le C\langle \xi_Q \rangle,
\]
and consequently $|d_Q| \le C\langle \xi_Q \rangle$
for all $\xi_Q \in Q$ and $Q \in \calQ$.
Therefore,
\[
\|\partial^{\beta}(\calF^{-1}\psi_Q)\|_{L^1}
\le C\langle \xi_Q \rangle^{|\beta|}
\left(\sum_{\beta' \le \beta}\|\partial^{\beta'}\Phi\|_{L^1}\right)
\sup_{Q \in \calQ}\|\calF^{-1}\psi_Q\|_{L^1}
=C_{\beta}\langle \xi_Q \rangle^{|\beta|}
\]
for all $Q \in \calQ$.
The proof is complete.
\end{proof}
We are now ready to prove Theorem \ref{1.2}.
\par
\medskip
\noindent
{\it Proof of Theorem \ref{1.2}.}
Let
$\sigma \in M_{(\alpha n/2,\alpha n+1),\bm{\alpha}}^{\bm{1},\bm{1}}
(\R^n\times\R^n)$.
Then, by Theorem \ref{1.1} and \eqref{(2.1)},
we see that $\sigma(X,D)$ is  bounded on $L^2(\R^n)$.
Note that $\sigma(x,\xi) \in L^1(\R^n\times\R^n)$ since
\[
\|\sigma\|_{L^1(\R^n\times\R^n)}
\le \sum_{Q,Q' \in \calQ}
\|\psi_Q(D_x)\psi_{Q'}(D_{\xi})\sigma\|_{L^1(\R^n\times\R^n)}
\le \|\sigma
\|_{M_{(\alpha n/2,\alpha n+1),\bm{\alpha}}^{\bm{1},\bm{1}}},
\]
where $\calQ$ is an $\alpha$-covering of $\R^n$
with a corresponding BAPU
$\{\psi_Q\}_{Q \in \calQ} \subset \calS(\R^n)$.
\par
We first consider the case $a \in \calS(\R^n)$.
Using
\begin{align*}
\sigma(X,D)(af)(x)
&=\frac{1}{(2\pi)^n}\int_{\R^n}
e^{ix\cdot\eta}\, \sigma(x,\eta)\, \widehat{af}(\eta)\, d\eta
\\
&=\frac{1}{(2\pi)^n}\int_{\R^n}
e^{ix\cdot\eta}\, \sigma(x,\eta)
\left( \frac{1}{(2\pi)^n}
\int_{\R^n}\widehat{a}(\eta-\xi)\, \widehat{f}(\xi)\, d\xi
\right) d\eta
\\
&=\frac{1}{(2\pi)^{2n}}\int_{\R^n}
e^{ix\cdot\xi}\left(\int_{\R^n}
e^{ix\cdot\eta}\, \sigma(x,\xi+\eta)\, \widehat{a}(\eta)\, d\eta
\right) \widehat{f}(\xi)\, d\xi
\end{align*}
and
\begin{align*}
a(x)\sigma(X,D)f(x)
&=\left(\frac{1}{(2\pi)^n}
\int_{\R^n}e^{ix\cdot\eta}\, \widehat{a}(\eta)\, d\eta\right)
\frac{1}{(2\pi)^n}
\int_{\R^n}e^{ix\cdot\xi}\, \sigma(x,\xi)\, \widehat{f}(\xi)\, d\xi
\\
&=\frac{1}{(2\pi)^{2n}}\int_{\R^n}
e^{ix\cdot\xi}\left(\int_{\R^n}
e^{ix\cdot\eta}\, \sigma(x,\xi)\, \widehat{a}(\eta)\, d\eta
\right) \widehat{f}(\xi)\, d\xi,
\end{align*}
we have
\begin{equation}\label{(4.2)}
[\sigma(X,D),a]f(x)
=C_n\int_{\R^n}e^{ix\cdot\xi}\left(
\int_{\R^n}e^{ix\cdot\eta}
\left(\sigma(x,\xi+\eta)-\sigma(x,\xi)\right)
\widehat{a}(\eta)\, d\eta \right) \widehat{f}(\xi)\, d\xi
\end{equation}
for all $f \in \calS(\R^n)$,
where $C_n=(2\pi)^{-2n}$.
We decompose $\sigma$ and $a$ as follows:
\[
\sigma(x,\xi)=\sum_{Q,Q' \in \calQ}\sigma_{Q,Q'}(x,\xi)
\quad \text{and} \quad
a(x)=\sum_{j=0}^{\infty}\varphi_j(D)a(x),
\]
where
$\sigma_{Q,Q'}(x,\xi)=\psi_Q(D_x)\psi_{Q'}(D_{\xi})\sigma(x,\xi)$
and $\{\varphi_j\}_{j \ge 0}$ is as in \eqref{(2.11)}.
Then
\begin{equation}\label{(4.3)}
[\sigma(X,D),a]=
\sum_{Q,Q' \in \calQ}[\sigma_{Q,Q'}(X,D),\varphi_0(D)a]
+\sum_{j=1}^{\infty}
[\sigma(X,D),\varphi_j(D)a].
\end{equation}
Let us consider the first sum of the right hand side of \eqref{(4.3)}.
Note that $\sigma_{Q,Q'} \in C^{\infty}(\R^n\times\R^n)$.
By \eqref{(4.2)} and Taylor's formula,
we have
\begin{align*}
&[\sigma_{Q,Q'}(X,D),\varphi_0(D)a]f(x)
\\
&=C_n\int_{\R^n}e^{ix\cdot\xi}\left\{
\int_{\R^n}e^{ix\cdot\eta}
\left(\sum_{k=1}^n \eta_k \int_0^1 \partial_{\xi_k}
\sigma_{Q,Q'}(x,\xi+t\eta) dt \right)
\varphi_0(\eta)\, \widehat{a}(\eta)
d\eta \right\} \widehat{f}(\xi) d\xi
\\
&=\frac{C_n}{i}\int_{\R^n}e^{ix\cdot\xi}
\left\{\sum_{k=1}^n\int_0^1
\left(
\int_{\R^n}e^{ix\cdot\eta}\,
\partial_{\xi_k}\sigma_{Q,Q'}(x,\xi+t\eta)\,
\varphi_0(\eta)\,
\widehat{\partial_k a}(\eta) d\eta \right)dt \right\}
\widehat{f}(\xi) d\xi
\end{align*}
for all $f \in \calS(\R^n)$,
where $\eta=(\eta_1,\dots,\eta_n) \in \R^n$.
Then,
by Theorem \ref{1.1},
\begin{equation}\label{(4.4)}
\begin{split}
&\|[\sigma_{Q,Q'}(X,D),\varphi_0(D)a]\|_{\calI_1}
\\
&\le C\sum_{k=1}^n\int_0^1
\left\|\int_{\R^n}e^{ix\cdot\eta}\,
(\partial_{\xi_k}\sigma_{Q,Q'})(x,\xi+t\eta)\,
\varphi_0(\eta)\, \widehat{\partial_k a}(\eta)\, d\eta
\right\|_{M_{\bm{\alpha n/2},\bm{\alpha}}^{\bm{1},\bm{1}}}dt
\end{split}
\end{equation}
for all $Q,Q' \in \calQ$.
Set
\[
\tau_{Q,Q'}^{k,t}(x,\xi)
=\int_{\R^n}e^{ix\cdot\eta}\,
(\partial_{\xi_k}\sigma_{Q,Q'})(x,\xi+t\eta)\,
\varphi_0(\eta)\, \widehat{\partial_k a}(\eta)\, d\eta.
\]
Recall that $\mathrm{supp}\, \psi_{Q} \subset Q$
(see the definition of BAPU) and
$\mathrm{supp}\, \varphi_0 \subset \{|\eta|\le 2\}$.
Since
\begin{align*}
&\calF_{x \to x'}[\tau_{Q,Q'}^{k,t}(x,\xi)]
=\int_{\R^n}\calF_{x \to x'}\left[e^{ix\cdot\eta}\,
(\partial_{\xi_k}\sigma_{Q,Q'})(x,\xi+t\eta)\right]
\varphi_0(\eta)\, \widehat{\partial_k a}(\eta)\,
d\eta
\\
&=\int_{\R^n}\psi_Q(x'-\eta)\,
\calF_1[\partial_{\xi_k}\psi_{Q'}(D_{\xi})\sigma]
(x'-\eta, \xi+t\eta)\,
\varphi_0(\eta)\, \widehat{\partial_k a}(\eta)\, d\eta
\end{align*}
and
\begin{align*}
&\calF_{\xi \to \xi'}[\tau_{Q,Q'}^{k,t}(x,\xi)]
=\int_{\R^n}e^{ix\cdot\eta}\, \calF_{\xi \to \xi'}\left[
(\partial_{\xi_k}\sigma_{Q,Q'})(x,\xi+t\eta)\right]
\varphi_0(\eta)\, \widehat{\partial_k a}(\eta)\,
d\eta
\\
&=\int_{\R^n}e^{i(x+t\xi')\cdot\eta}\, (i\xi_k')\,
\psi_{Q'}(\xi')\, \calF_2[\psi_Q(D_x)\sigma](x, \xi')\,
\varphi_0(\eta)\, \widehat{\partial_k a}(\eta)\, d\eta,
\end{align*}
we see that
\begin{align*}
&\mathrm{supp}\, \calF_{x \to x'}[\tau_{Q,Q'}^{k,t}(x,\xi)]
\subset \{x' \in \R^n : x' \in Q+\overline{B(0,2)}\},
\\
&\mathrm{supp}\, \calF_{\xi \to \xi'}[\tau_{Q,Q'}^{k,t}(x,\xi)]
\subset \{\xi' \in \R^n : \xi' \in Q'\}.
\end{align*}
Then,
by \eqref{(2.9)}, Lemma \ref{2.1} and
$\sup_{Q \in \calQ}\|\calF^{-1}\psi_Q\|_{L^1}<\infty$,
we have
\begin{equation}\label{(4.5)}
\begin{split}
\left\|\tau_{Q,Q'}^{k,t}
\right\|_{M_{\bm{\alpha n/2},\bm{\alpha}}^{\bm{1},\bm{1}}}
&=\sum_{\substack{\widetilde{Q}\cap(Q+\overline{B(0,2)})
\neq \emptyset \\ \widetilde{Q} \in \calQ}}
\sum_{\substack{\widetilde{Q}' \cap Q' \neq \emptyset
\\ \widetilde{Q}' \in \calQ}}
\langle x_{\widetilde{Q}} \rangle^{\alpha n/2}
\langle \xi_{\widetilde{Q}'} \rangle^{\alpha n/2}
\\
&\qquad \times
\left\| \psi_{\widetilde{Q}}(D_x)
\psi_{\widetilde{Q}'}(D_\xi)\tau_{Q,Q'}^{k,t}
\right\|_{L^{1}(\R^n\times\R^n)}
\\
&\le C
\langle x_{Q} \rangle^{\alpha n/2}
\langle \xi_{Q'} \rangle^{\alpha n/2}
\left\|\tau_{Q,Q'}^{k,t}
\right\|_{L^{1}(\R^n\times\R^n)}.
\end{split}
\end{equation}
Let $\gamma \in \calS(\R^n)$ be such that
$|\gamma| \ge 1$ on $\{|\xi| \le 4\}$
and $\mathrm{supp}\, \widehat{\gamma} \subset \{|x|<1\}$
(for the existence of such a function,
see the proof of \cite[Theorem 2.6]{Frazier-Jawerth}).
Since
$\varphi_0=\varphi_0\, \gamma/\gamma=\gamma\, (\varphi_0/\gamma)$,
we can write $\varphi_0=\gamma\, \Phi$,
where $\Phi=\varphi_0/\gamma \in \calS(\R^n)$.
Then
\begin{equation}\label{(4.6)}
\begin{split}
\tau_{Q,Q'}^{k,t}(x,\xi)
&=\int_{\R^n}e^{ix\cdot\eta}\,
(\partial_{\xi_k}\sigma_{Q,Q'})(x,\xi+t\eta)\,
\varphi_0(\eta)\,
\widehat{\partial_k a}(\eta)\, d\eta
\\
&=\int_{\R^n}e^{ix\cdot\eta}\,
(\partial_{\xi_k}\sigma_{Q,Q'})(x,\xi+t\eta)\,
\gamma(\eta)\, \widehat{\Phi(D)(\partial_k a)}(\eta)\, d\eta.
\end{split}
\end{equation}
By \eqref{(2.9)},
\eqref{(2.12)} and Lemma \ref{4.3},
we see that
\begin{equation}\label{(4.7)}
\begin{split}
&\|\partial_{\xi_k}\sigma_{Q,Q'}\|_{L^{1}(\R^n\times\R^n)}
\le \sum_{\widetilde{Q}' \in \calQ}
\|\partial_{\xi_k}
(\psi_{\widetilde{Q}'}(D_\xi)\sigma_{Q,Q'})
\|_{L^{1}(\R^n\times\R^n)}
\\
&=\sum_{\widetilde{Q}' \cap Q' \neq \emptyset}
\int_{\R^n}
\left\|[\partial_{\xi_k}(\calF^{-1}\psi_{\widetilde{Q}'})]*
\sigma_{Q,Q'}(x,\cdot)\right\|_{L^{1}}dx
\\
&\le \sum_{\widetilde{Q}' \cap Q' \neq \emptyset}
\int_{\R^n}
\|\partial_{\xi_k}(\calF^{-1}\psi_{\widetilde{Q}'})\|_{L^1}
\|\sigma_{Q,Q'}(x,\cdot)\|_{L^{1}}\, dx
\\
&\le C\sum_{\widetilde{Q}' \cap Q' \neq \emptyset}
\langle \xi_{\widetilde{Q}'} \rangle
\|\sigma_{Q,Q'}\|_{L^{1}(\R^n\times\R^n)}
\le Cn_0\langle \xi_{Q'} \rangle
\|\sigma_{Q,Q'}\|_{L^{1}(\R^n\times\R^n)}.
\end{split}
\end{equation}
On the other hand, by \eqref{(2.5)},
\begin{equation}\label{(4.8)}
\mathrm{supp}\, \calF_{\xi \to \xi'}
\left[\partial_{\xi_k}\sigma_{Q,Q'}(x,\xi)\right] \subset Q'
\subset B(d_{Q'}, 2R_{Q'})
\qquad \text{for all $x \in \R^n$}.
\end{equation}
Noting
$R_{Q'} \asymp |Q'|^{1/n} \asymp \langle \xi_{Q'} \rangle^{\alpha}$
(see \eqref{(2.7)}),
we have by \eqref{(2.8)}, \eqref{(4.6)}, \eqref{(4.7)},
\eqref{(4.8)} and Lemma \ref{4.2}
\begin{equation}\label{(4.9)}
\begin{split}
\left\|\tau_{Q,Q'}^{k,t}\right\|_{L^1(\R^n\times\R^n)}
&\le C(1+2R_{Q'})^{n/2}
\|\partial_{\xi_k}\sigma_{Q,Q'}\|_{L^{1}(\R^n\times\R^n)}
\|\Phi(D)(\partial_k a)\|_{L^{\infty}}
\\
&\le CR_{Q'}^{n/2}
\langle \xi_{Q'} \rangle
\|\sigma_{Q,Q'}\|_{L^{1}(\R^n\times\R^n)}
\|\calF^{-1}\Phi\|_{L^1}
\|\partial_k a\|_{L^{\infty}}
\\
&\le C\langle \xi_{Q'} \rangle^{\alpha n/2+1}
\|\sigma_{Q,Q'}\|_{L^{1}(\R^n\times\R^n)}
\|\nabla a\|_{L^{\infty}}
\end{split}
\end{equation}
for all $0<t<1$.
Combining \eqref{(4.4)}, \eqref{(4.5)} and \eqref{(4.9)},
we have
\begin{align*}
&\sum_{Q,Q' \in \calQ}
\|[\sigma_{Q,Q'}(X,D),\varphi_0(D)a]\|_{\calI_1}
\\
&\le C\|\nabla a\|_{L^{\infty}}
\left(\sum_{Q,Q' \in \calQ}
\langle x_{Q} \rangle^{\alpha n/2}
\langle \xi_{Q'} \rangle^{\alpha n+1}
\|\sigma_{Q,Q'}\|_{L^{1}(\R^n\times\R^n)}\right)
\\
&= C\|\nabla a\|_{L^{\infty}}
\|\sigma\|_{M_{(\alpha n/2,\alpha n+1),\bm{\alpha}}^{\bm{1},\bm{1}}}.
\end{align*}
We next consider the second sum of the right hand side of \eqref{(4.3)}.
Since
\[
\varphi_j(D)a(x)
=\int_{\R^n}
2^{jn}(\calF^{-1}\varphi)(2^{j}(x-y))\, (a(y)-a(x))\, dy
\]
and $a$ is a Lipschitz function,
we have
$\|\varphi_j(D)a\|_{L^\infty} \le C2^{-j}\|\nabla a\|_{L^\infty}$
for all $j \ge 1$.
Hence, by \eqref{(2.2)} and Theorem \ref{1.1},
we see that
\begin{align*}
&\sum_{j=1}^{\infty}
\|[\sigma(X,D), \varphi_j(D)a]\|_{\calI_1}
\\
&\le \sum_{j=1}^{\infty}
\left( \|\sigma(X,D)(\varphi_j(D)a)\|_{\calI_1}
+\|(\varphi_j(D)a)\sigma(X,D)\|_{\calI_1}\right)
\\
&\le 2\sum_{j=1}^{\infty}
\|\varphi_j(D)a\|_{\calL(L^2)}\|\sigma(X,D)\|_{\calI_1}
=2\sum_{j=1}^{\infty}
\|\varphi_j(D)a\|_{L^{\infty}}
\|\sigma(X,D)\|_{\calI_1}
\\
&\le C\sum_{j=1}^{\infty}2^{-j}
\|\nabla a\|_{L^{\infty}}
\|\sigma\|_{M_{\bm{\alpha n/2},\bm{\alpha}}^{\bm{1},\bm{1}}}
\le C\|\nabla a\|_{L^\infty}
\|\sigma\|_{M_{(\alpha n/2,\alpha n+1),\bm{\alpha}}^{\bm{1},\bm{1}}}.
\end{align*}
Consequently, we obtain Theorem \ref{1.2} with $a \in \calS(\R^n)$.
\par
Finally, we consider the general case.
Let $a$ be a Lipschitz function on $\R^n$.
Since $[\sigma(X,D),a]=0$ if $a$ is a constant function,
we may assume $\|\nabla a\|_{L^{\infty}} \neq 0$.
Then, by Lemma \ref{4.1}, we have
\begin{equation}\label{(4.10)}
\langle [\sigma(X,D), a]f, g \rangle
=\lim_{\epsilon \to 0}
\langle [\sigma(X,D), a_{\epsilon}]f, g \rangle
\qquad \text{for all $f,g \in \calS(\R^n)$},
\end{equation}
where $\{a_{\epsilon}\}_{0<\epsilon<\epsilon(a)} \subset \calS(\R^n)$
satisfies
$\|\nabla a_{\epsilon}\|_{L^{\infty}} \le C\|\nabla a\|_{L^{\infty}}$
for all $0<\epsilon<\epsilon(a)$.
By \eqref{(2.1)} and Theorem \ref{1.2} with $a \in \calS(\R^n)$,
\begin{equation}\label{(4.11)}
\begin{split}
&\|[\sigma(X,D),a_{\epsilon}]\|_{\calL(L^2)}
\le \|[\sigma(X,D),a_{\epsilon}]\|_{\calI_1}
\\
&\le C\|\nabla a_{\epsilon}\|_{L^\infty}
\|\sigma\|_{M_{(\alpha n/2,\alpha n+1),\bm{\alpha}}^{\bm{1},\bm{1}}}
\le C\|\nabla a\|_{L^\infty}
\|\sigma\|_{M_{(\alpha n/2,\alpha n+1),\bm{\alpha}}^{\bm{1},\bm{1}}}
\end{split}
\end{equation}
for all $0<\epsilon<\epsilon(a)$.
Combining \eqref{(4.10)} and \eqref{(4.11)},
we have
\begin{equation}\label{(4.12)}
\|[\sigma(X,D),a]\|_{\calL(L^2)}
\le C\|\nabla a\|_{L^\infty}
\|\sigma\|_{M_{(\alpha n/2,\alpha n+1),\bm{\alpha}}^{\bm{1},\bm{1}}}.
\end{equation}
Then, \eqref{(4.10)}, \eqref{(4.11)} and \eqref{(4.12)} give
\begin{equation}\label{(4.13)}
\langle [\sigma(X,D), a]f, g \rangle
=\lim_{\epsilon \to 0}
\langle [\sigma(X,D), a_{\epsilon}]f, g \rangle
\qquad \text{for all $f,g \in L^2(\R^n)$}.
\end{equation}
Let $\{f_j\}, \{g_j\}$ be orthonormal systems in $L^2(\R^n)$.
It follows from \eqref{(2.3)}, \eqref{(4.11)}, \eqref{(4.13)}
and Fatou's lemma that
\begin{align*}
\sum_{j=1}^{\infty}
\left|\langle [\sigma(X,D),a]f_j, g_j \rangle \right|
&=\sum_{j=1}^{\infty}\lim_{\epsilon \to 0}
\left|\langle [\sigma(X,D),a_{\epsilon}]f_j, g_j \rangle \right|
\\
&\le \liminf_{\epsilon \to 0}\sum_{j=1}^{\infty}
\left|\langle [\sigma(X,D),a_{\epsilon}]f_j, g_j \rangle \right|
\\
&\le \liminf_{\epsilon \to 0}
\|[\sigma(X,D),a_{\epsilon}]\|_{\calI_1}
\le C\|\nabla a\|_{L^\infty}
\|\sigma\|_{M_{(\alpha n/2,\alpha n+1),\bm{\alpha}}^{\bm{1},\bm{1}}}.
\end{align*}
Therefore,
taking the supremum over all orthonormal systems
$\{f_j\}, \{g_j\}$ in $L^2(\R^n)$,
we have by \eqref{(2.3)}
\[
\|[\sigma(X,D),a]\|_{\calI_1}
\le C\|\nabla a\|_{L^\infty}
\|\sigma\|_{M_{(\alpha n/2,\alpha n+1),\bm{\alpha}}^{\bm{1},\bm{1}}}.
\]
The proof is complete.

\appendix
\section{The inclusion between function spaces}\label{appendix}
We first consider the relation between
$B_{(n/2,n/2)}^{\bm{1},\bm{1}}$ and $M^{1,1}$.
Let $1 \le p,q \le \infty$ and $p'$
be the conjugate exponent of $p$
(that is, $1/p+1/p'=1$).
In \cite[Theorem 3.1]{Toft}, Toft proved the inclusions
\[
B_{n\nu_1(p,q)}^{p,q}(\R^n) \hookrightarrow M^{p,q}(\R^n)
\hookrightarrow B_{n\nu_2(p,q)}^{p,q}(\R^n),
\]
where
\begin{align*}
&\nu_1(p,q)
=\max\{0,1/q-\min(1/p,1/p')\},
\\
&\nu_2(p,q)
=\min\{0,1/q-\max(1/p,1/p')\}
\end{align*}
(see also Gr\"obner \cite{Grobner}, Okoudjou \cite{Okoudjou}).
Due to \cite[Theorem 1.2]{Sugimoto-Tomita},
the optimality of the inclusion relation
between Besov and modulation spaces
is described in the following way:
\begin{prop}\label{A.1}
Let $1\le p,q \le \infty$ and $s \in \R$.
Then the following are true:
\begin{enumerate}
\item[{\rm (1)}]
If $B_s^{p,q}(\R^n) \hookrightarrow M^{p,q}(\R^n)$,
then $s \ge n \nu_1(p,q)$.
\item[{\rm (2)}]
If $M^{p,q}(\R^n) \hookrightarrow B_s^{p,q}(\R^n)$,
then $s \le n \nu_2(p,q)$.
\end{enumerate}
\end{prop}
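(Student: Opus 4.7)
The plan is to prove both parts by contraposition: whenever $s$ violates the stated bound we construct an explicit family of Schwartz functions $\{f_N\}_{N \ge 1}$ whose $B_s^{p,q}$- and $M^{p,q}$-norms can be estimated from above and below and compared asymptotically, so that one of the two ratios $\|f_N\|_{M^{p,q}}/\|f_N\|_{B_s^{p,q}}$ (for (1)) or $\|f_N\|_{B_s^{p,q}}/\|f_N\|_{M^{p,q}}$ (for (2)) blows up as $N \to \infty$, contradicting the assumed embedding. The two statements are tied together by duality: a bounded embedding $X \hookrightarrow Y$ dualizes to $Y^{*} \hookrightarrow X^{*}$, and for $1 \le p,q < \infty$ the duals of $M^{p,q}$ and $B_s^{p,q}$ are $M^{p',q'}$ and $B_{-s}^{p',q'}$. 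Combining this with the identity $\nu_2(p,q) = -\nu_1(p',q')$, which is an elementary computation using $1/p + 1/p' = 1$, reduces (2) to (1) applied to the dual exponents. So I would concentrate the main effort on (1) and treat (2) by duality at the end.

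For (1), I would build the test family out of modulated copies of a single Schwartz bump $\varphi$ with $\widehat{\varphi}$ compactly supported in a small ball, so that the Fourier side is easy to localize both dyadically and cubically. The first, simplest choice is $f_N(x) = e^{iN e_1 \cdot x}\varphi(x)$. Its Fourier transform lives in a ball of radius $O(1)$ around $N e_1$, so only the dyadic block $j \asymp \log_2 N$ contributes, yielding $\|f_N\|_{B_s^{p,q}} \asymp N^{s}\|\varphi\|_{L^p}$. On the other hand, the basic modulation-invariance of the $M^{p,q}$-norm gives $\|f_N\|_{M^{p,q}} \asymp \|\varphi\|_{M^{p,q}}$. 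Sending $N \to \infty$ already forces $s \ge 0$, which settles the regime $\nu_1(p,q) = 0$.

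For the remaining regime $\nu_1(p,q) > 0$ (which occurs precisely when $q$ is small relative to the distance of $p$ from $2$), I would replace the single modulation by a superposition $f_N(x) = \sum_{k \in \Z^n \cap B(0,N)} c_k\, e^{ik\cdot x}\varphi(x)$ with coefficients $\{c_k\}$ tuned to saturate the relevant exponent. The $M^{p,q}$-norm of such a sum reduces, up to constants, to a weighted $\ell^q$-sum of $|c_k|\,\|\varphi\|_{L^p}$ over $k \in \Z^n \cap B(0,N)$, while the $B_s^{p,q}$-norm is a weighted $\ell^q$-sum over dyadic shells $\{2^{j-1}\le |\xi|\le 2^{j+1}\}$ of $L^p$-norms of partial exponential sums $\sum_{k \in B_j} c_k e^{ik\cdot x}\varphi(x)$. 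For the right choice of $\{c_k\}$ (or, when $p>2$, for a random choice of signs $\{\pm c_k\}$ invoking Khintchine's inequality), the ratio $\|f_N\|_{M^{p,q}}/\|f_N\|_{B_s^{p,q}}$ should be $\asymp N^{n\nu_1(p,q)-s}$, which is unbounded as soon as $s < n\nu_1(p,q)$.

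The main obstacle will be the sharp computation of the Besov norm of these exponential-sum test functions, and in particular matching the $L^p$-behavior of $\sum_{k\in B_j} c_k e^{ik\cdot x}\varphi(x)$ against the $\ell^q$-structure of the coefficients. The appearance of $\min(1/p,1/p')$ and $\max(1/p,1/p')$ in $\nu_1$ and $\nu_2$ corresponds to a Hausdorff--Young dichotomy, so the two cases $p \le 2$ and $p \ge 2$ must be handled separately: direct estimates work on one side, while on the other a probabilistic selection of the $c_k$ is needed to realize the extremal rate. This is essentially the content of the construction in Sugimoto--Tomita; once those estimates are in hand, part (1) follows by letting $N \to \infty$, and part (2) follows from (1) via the duality reduction described above.
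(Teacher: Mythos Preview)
The paper does not actually prove Proposition~A.1: it simply attributes it to \cite[Theorem~1.2]{Sugimoto-Tomita}, adding the remark that statement~(2) was established there only for $1\le p,q<\infty$, with the endpoint cases $p=\infty$ or $q=\infty$ handled separately in \cite[Appendix~A]{K-S-T}. Your outline is therefore not being compared against an argument in this paper but against the cited literature, and you are explicit that your construction ``is essentially the content of the construction in Sugimoto--Tomita.'' As a sketch of that approach your proposal is reasonable: the single modulated bump $e^{iNe_1\cdot x}\varphi(x)$ already forces $s\ge 0$, and lacunary/random exponential sums together with Khintchine's inequality are indeed a standard way to detect the exponents $\nu_1,\nu_2$ when $p\neq 2$. (The Sugimoto--Tomita argument is organized around the dilation behavior $\|f(\lambda\cdot)\|_{M^{p,q}}$ as $\lambda\to 0,\infty$, which amounts to the same asymptotics packaged differently.)

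There is one genuine gap in your plan. Your reduction of (2) to (1) by duality, using $(M^{p,q})^*=M^{p',q'}$ and $(B_s^{p,q})^*=B_{-s}^{p',q'}$, is valid only for $1\le p,q<\infty$; you note this restriction yourself but then do not say what to do when $p=\infty$ or $q=\infty$. The paper singles out precisely these endpoint cases as requiring an additional argument (supplied in \cite[Appendix~A]{K-S-T}), so a complete proof along your lines must either run the test-function construction for (2) directly at the endpoints, or argue by an approximation/limiting procedure in the exponents. Without that, the proposal covers exactly the range already attributed to \cite{Sugimoto-Tomita} and leaves the same endpoint gap open.
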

In particular, we have the best inclusions
\begin{equation}\label{(A.1)}
B_n^{1,1}(\R^n) \hookrightarrow M^{1,1}(\R^n)
\hookrightarrow B_0^{1,1}(\R^n).
\end{equation}
Hence,
we see that
$B_{n/2}^{1,1}(\R^n)$ and $M^{1,1}(\R^n)$
have no inclusion relation with each other,
and $B_{(n/2,n/2)}^{\bm{1},\bm{1}}(\R^n\times\R^n)$ and
$M^{1,1}(\R^{2n})$
also have the same relation since
$\|f \otimes g\|_{M_{(s_1,s_2), \bm{\alpha}}^{\bm{1},\bm{1}}}
=\|f\|_{M_{s_1,\alpha}^{1,1}}\|g\|_{M_{s_2,\alpha}^{1,1}}$ and
$M_{(0,0)}^{\bm{1},\bm{1}}(\R^n\times\R^n)=M^{1,1}(\R^{2n})$
(see Section \ref{section2}).
We remark that 
the statement (2) was shown in a restricted
case $1\le p,q < \infty$ in \cite{Sugimoto-Tomita},
but it is also true for the endpoint
$p=\infty$ or $q=\infty$
(see \cite[Appendix A]{K-S-T}).

\par
We next give remarks on the relation between
$M^{1,1}$ and $L_s^2\cap H^s$.
Recall that the norms on $L_s^2(\R^{2n})$ and $H^s(\R^{2n})$
are defined by
\begin{align*}
&\|\sigma\|_{L_s^2}
=\left(\int_{\R^{2n}}\langle x;\xi \rangle^{2s} \,
|\sigma(x,\xi)|^2 \, dx \, d\xi \right)^{1/2},
\\
&\|\sigma\|_{H^s}
=\left(\int_{\R^{2n}}\langle x;\xi \rangle^{2s} \,
|\widehat{\sigma}(x,\xi)|^2 \, dx \, d\xi \right)^{1/2},
\end{align*}
where $\langle x;\xi \rangle=(1+|x|^2+|\xi|^2)^{1/2}$
and $x,\xi \in \R^n$.
\begin{prop}\label{A.2}
The following are true:
\begin{enumerate}
\item
If $s>2n$, then
$L_s^2(\R^{2n})\cap H^s(\R^{2n}) \hookrightarrow M^{1,1}(\R^{2n})$.
\item
If $s\le 2n$, then
$L_s^2(\R^{2n})\cap H^s(\R^{2n}) \not\hookrightarrow M^{1,1}(\R^{2n})$.
\item
If $s>n$, then
$M^{1,1}(\R^{2n}) \not\hookrightarrow L_s^2(\R^{2n})\cap H^s(\R^{2n})$.
\end{enumerate}
\end{prop}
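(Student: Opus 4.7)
For part (1), I would use the STFT characterization of $M^{1,1}$. Fix a Schwartz window $g\in\calS(\R^{2n})$ with $\|g\|_{L^2}=1$, so that $\|\sigma\|_{M^{1,1}(\R^{2n})}\asymp\|V_g\sigma\|_{L^1(\R^{4n})}$, where $V_g\sigma(y,\eta)=\int\sigma(t)\,\overline{g(t-y)}\,e^{-i\eta\cdot t}\,dt$. Cauchy--Schwarz against the weight $\langle(y,\eta)\rangle^{-s}$ gives
\[
\|V_g\sigma\|_{L^1(\R^{4n})}\le \|\langle(y,\eta)\rangle^{-s}\|_{L^2(\R^{4n})}\,\|\langle(y,\eta)\rangle^s V_g\sigma\|_{L^2(\R^{4n})},
\]
whose first factor is finite precisely when $s>2n$. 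For the weighted $L^2$-norm, split $\langle(y,\eta)\rangle^{2s}\lesssim\langle y\rangle^{2s}+\langle\eta\rangle^{2s}$; Plancherel in $\eta$ together with Peetre's inequality bounds the $\langle y\rangle^{2s}$ contribution by $C\|\sigma\|_{L_s^2}^2$, and the symmetric identity $|V_g\sigma(y,\eta)|=|V_{\hat g}\hat\sigma(\eta,-y)|$ bounds the $\langle\eta\rangle^{2s}$ contribution by $C\|\sigma\|_{H^s}^2$.

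For part (2), the plan is a Gabor-frame counterexample. Pick $\phi\in\calS(\R^{2n})$ so that the system $\phi_{k,\ell}(u):=\phi(u-k)e^{i\ell\cdot u}$, $(k,\ell)\in\Z^{2n}\times\Z^{2n}$, is a well-localized Gabor frame for $L^2(\R^{2n})$. A direct computation (using $\langle u\rangle^s\phi(u-k)\asymp\langle k\rangle^s\phi(u-k)$ up to rapidly decaying corrections, and the Fourier invariance of the Gabor lattice) delivers
\[
\Bigl\|\sum c_{k,\ell}\phi_{k,\ell}\Bigr\|_{L_s^2}^2\asymp\sum|c_{k,\ell}|^2\langle k\rangle^{2s},\ \Bigl\|\sum c_{k,\ell}\phi_{k,\ell}\Bigr\|_{H^s}^2\asymp\sum|c_{k,\ell}|^2\langle\ell\rangle^{2s},
\]
together with $\|\sum c_{k,\ell}\phi_{k,\ell}\|_{M^{1,1}}\asymp\sum|c_{k,\ell}|$. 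Take $c_{k,\ell}$ supported on the dyadic shells $A_j=\{(k,\ell):|k|,|\ell|\sim 2^j\}$ with constant value $a_j=2^{-4nj}/j$. Since $|A_j|\asymp 2^{4nj}$, one finds $\sum|c_{k,\ell}|^2\langle k\rangle^{4n}\asymp\sum_j j^{-2}<\infty$ (and similarly for $\langle\ell\rangle^{4n}$), while $\sum|c_{k,\ell}|\asymp\sum_j j^{-1}=\infty$. This produces $\sigma\in L_{2n}^2\cap H^{2n}$ with $\|\sigma\|_{M^{1,1}}=\infty$, covering the endpoint $s=2n$; the case $s<2n$ follows a fortiori (and is captured by cruder finite-cutoff scalings).

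For part (3), take $\sigma=T_{z_0}g$ with $g\in\calS(\R^{2n})\setminus\{0\}$. The identity $|V_g(T_{z_0}f)(y,\eta)|=|V_gf(y-z_0,\eta)|$ gives $\|T_{z_0}g\|_{M^{1,1}}=\|g\|_{M^{1,1}}$, and $|\widehat{T_{z_0}g}|=|\hat g|$ gives $\|T_{z_0}g\|_{H^s}=\|g\|_{H^s}$. But $\|T_{z_0}g\|_{L_s^2}^2=\int\langle z\rangle^{2s}|g(z-z_0)|^2\,dz\gtrsim\langle z_0\rangle^{2s}\|g\|_{L^2}^2\to\infty$ as $|z_0|\to\infty$, for any $s>0$. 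The main obstacle I anticipate is the endpoint $s=2n$ in (2): a naive scaling of a finite Gabor sum only forces failure for $s<2n$, and reaching the threshold requires the logarithmic dyadic-shell construction above, where the critical weighted $\ell^2$-sums barely converge while the $\ell^1$-sum for the $M^{1,1}$-norm barely diverges.
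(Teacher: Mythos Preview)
Your arguments are correct, and where the paper actually gives a proof---only for part (3), deferring (1) and (2) to Gr\"ochenig--Heil \cite{Grochenig-Heil}---your route is genuinely different. The paper argues (3) by contradiction through the Besov embedding $B_{2n}^{1,1}(\R^{2n})\hookrightarrow M^{1,1}(\R^{2n})$ of \eqref{(A.1)}, exhibiting the explicit function $\langle x;\xi\rangle^{-2n-(s-n)/2}$, which lies in $B_{2n}^{1,1}(\R^{2n})$ (it is in $L^1$ since the exponent exceeds $2n$, and smooth so its high Littlewood--Paley pieces decay arbitrarily fast) but not in $L_s^2(\R^{2n})$ when $s>n$. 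Your translation argument is more elementary: it needs no Besov machinery, only the translation invariance of the $M^{1,1}$ and $H^s$ norms against the translation non-invariance of $L_s^2$, and it actually yields the stronger conclusion that the embedding fails for every $s>0$. The paper's approach, in exchange, produces a single concrete element of $M^{1,1}\setminus L_s^2$ rather than a sequence along which the norm ratio blows up.

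For (1) your STFT/Cauchy--Schwarz argument is standard and fine. In your sketch of (2) one step needs tightening: the lower bound $\bigl\|\sum c_{k,\ell}\phi_{k,\ell}\bigr\|_{M^{1,1}}\gtrsim\sum|c_{k,\ell}|$ does \emph{not} follow from $\{\phi_{k,\ell}\}$ being merely a Gabor frame, because an overcomplete Gabor system has nontrivial $\ell^1$-synthesis kernel (there exist $c\in\ell^1$, $c\neq 0$, with $\sum c_{k,\ell}\phi_{k,\ell}=0$). You need a system for which the synthesis map $\ell^1\to M^{1,1}$ is bounded below---a Gabor Riesz basis on a subcritical lattice, or a Wilson basis, both of which are unconditional bases for $M^{1,1}$ with two-sided $\ell^1$ coefficient characterization. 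With that adjustment your dyadic-shell choice $a_j=2^{-4nj}/j$ on $|k|,|\ell|\sim 2^j$ does exactly what you claim at the endpoint $s=2n$, and the $L_s^2$, $H^s$ upper bounds follow from the rapid decay of $\phi$ and $\widehat{\phi}$ via Peetre's inequality.
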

\begin{proof}
We give the proof only for (3) because the assertions
(1) and (2) were already proved in 
\cite[Proposition 4.2]{Grochenig-Heil}.
Suppose, contrary to our claim, that 
$M^{1,1}(\R^{2n}) \hookrightarrow L_s^2(\R^{2n})\cap H^s(\R^{2n})$
for $s>n$.
Then, by \eqref{(A.1)},
\[
B_{2n}^{1,1}(\R^{2n}) \hookrightarrow
M^{1,1}(\R^{2n}) \hookrightarrow L_s^2(\R^{2n})\cap H^s(\R^{2n}).
\]
However, since
$\langle x;\xi \rangle^{-2n-(s-n)/2} \in B_{2n}^{1,1}(\R^{2n})$
and
$\langle x;\xi \rangle^{-2n-(s-n)/2} \not\in L_s^2(\R^{2n})$
if $s>n$,
this is a contradiction.
\end{proof}
We finally consider the relation between
$B_{(n/2,n/2)}^{\bm{1},\bm{1}}$
and $L_s^2 \cap H^s$.
\begin{prop}\label{A.3}
The following are true:
\begin{enumerate}
\item
If $s>2n$, then
$L_s^2(\R^{2n})\cap H^s(\R^{2n}) \hookrightarrow
B_{(n/2,n/2)}^{\bm{1},\bm{1}}(\R^n \times \R^n)$.
\item
If $s>n$, then
$B_{(n/2,n/2)}^{\bm{1},\bm{1}}(\R^n \times \R^n)
\not\hookrightarrow L_s^2(\R^{2n})\cap H^s(\R^{2n})$.
\end{enumerate}
\end{prop}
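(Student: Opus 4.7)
The plan is to prove (1) by a direct Cauchy--Schwarz-plus-Leibniz estimate in the spirit of Daubechies' classical trace-class argument, and to prove (2) by exhibiting a tensor-product counterexample.

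\emph{For (1).} Starting from
\[
\|\varphi_j(D_x)\varphi_k(D_\xi)\sigma\|_{L^1(\R^{2n})}\le C\,\bigl\|\langle x\rangle^{n/2+\epsilon}\langle\xi\rangle^{n/2+\epsilon}\varphi_j(D_x)\varphi_k(D_\xi)\sigma\bigr\|_{L^2}
\]
(Cauchy--Schwarz, $\epsilon>0$), I apply Plancherel and Leibniz: with an even integer $2M>n/2+\epsilon$, the weighted $L^2$-norm is rewritten via $\langle x\rangle^{2M}\langle\xi\rangle^{2M}\leftrightarrow(1-\Delta_y)^M(1-\Delta_\eta)^M$ acting on $\varphi_j(y)\varphi_k(\eta)\widehat{\sigma}(y,\eta)$. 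Expanding by Leibniz, using $\|\partial^{\alpha}\varphi_j\|_{L^\infty}\le C2^{-j|\alpha|}$, and then the support bound $\chi_{|y|\asymp 2^j}\chi_{|\eta|\asymp 2^k}\le C2^{-s\max(j,k)}\langle(y,\eta)\rangle^s$ valid on $\mathrm{supp}\,\varphi_j\otimes\varphi_k$ for $j,k\ge 1$, produces terms of the form
\[
2^{-j|\alpha_1|-k|\beta_1|-s\max(j,k)}\,\|x^{\alpha_2}\xi^{\beta_2}\sigma\|_{H^s},
\]
where $|\alpha_1|+|\alpha_2|,|\beta_1|+|\beta_2|\le 2M$. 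The polynomial-multiplier norms $\|x^{\alpha_2}\xi^{\beta_2}\sigma\|_{H^s}$ with $|\alpha_2|+|\beta_2|\lesssim n$ are controlled by $\|\sigma\|_{L^2_s\cap H^s}$ via standard weighted-Sobolev commutator estimates, and this is where the hypothesis $s>2n$ is used. The remaining geometric sum $\sum_{j,k\ge 0}2^{(j+k)n/2-s\max(j,k)}$ converges once $s>n$, finishing Part (1).

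\emph{For (2).} Fix a bump $\phi\in\calS(\R^n)$ and set
\[
f(x)=\sum_{k=1}^\infty \tfrac{1}{k^2}\phi(x-2^ke_1),\qquad \sigma(x,\xi)=f(x)\phi(\xi).
\]
Translation invariance of $\|\cdot\|_{B^{1,1}_{n/2}}$ gives $f\in B^{1,1}_{n/2}(\R^n)$, and the tensor factorization $\|f\otimes g\|_{B^{(1,1),(1,1)}_{(n/2,n/2)}}=\|f\|_{B^{1,1}_{n/2}}\|g\|_{B^{1,1}_{n/2}}$ (observed in the $M^{1,1}$-discussion just above) then yields $\sigma\in B^{(1,1),(1,1)}_{(n/2,n/2)}(\R^n\times\R^n)$. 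On the other hand $\|\phi(\cdot-2^ke_1)\|_{L^2_s(\R^n)}^2\asymp 2^{2ks}$, so
\[
\|\sigma\|_{L^2_s(\R^{2n})}^2\ge c\|\phi\|_{L^2}^2\sum_{k\ge 1}k^{-4}\,2^{2ks}=\infty
\]
for every $s>0$; hence $\sigma\notin L^2_s\cap H^s$, proving (2).

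\emph{Main obstacle.} The hardest step is the polynomial-multiplier absorption in (1): commuting the weight $\langle x\rangle^{2M}\langle\xi\rangle^{2M}$ past the Fourier cutoffs generates norms $\|x^{\alpha_2}\xi^{\beta_2}\sigma\|_{H^s}$ with $|\alpha_2|+|\beta_2|$ of order $n$, and it is exactly this absorption (via weighted-Sobolev commutator bounds) that consumes the margin $s>2n$, rather than the milder $s>n$ needed only for the final geometric summation.
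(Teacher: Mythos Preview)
Your argument for Part~(2) is correct and in fact more direct than the paper's: the paper proceeds by contradiction, first proving the embedding $B_{n}^{1,1}(\R^{2n})\hookrightarrow B_{(n/2,n/2)}^{\bm{1},\bm{1}}(\R^n\times\R^n)$ and then invoking the counterexample $\langle x;\xi\rangle^{-2n-(s-n)/2}\in B_{2n}^{1,1}\setminus L^2_s$ from Proposition~A.2. Your explicit tensor-product function does the job in one stroke.

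Part~(1), however, has a genuine gap at the ``weighted-Sobolev commutator'' step. The assertion that
\[
\|x^{\alpha_2}\xi^{\beta_2}\sigma\|_{H^s}\le C\,\|\sigma\|_{L^2_s\cap H^s}
\]
for $|\alpha_2|+|\beta_2|>0$ is \emph{false}, regardless of how large $s$ is. Take $\sigma=h_N(x_1)\,h_0(x_2)\cdots h_0(\xi_n)$ in $\R^{2n}$, where $h_N$ is the $N$-th Hermite function. Since Hermite functions are Fourier eigenfunctions and $h_N$ is concentrated on $|x_1|\asymp\sqrt{N}$, one has $\|\sigma\|_{L^2_s}\asymp\|\sigma\|_{H^s}\asymp N^{s/2}$, while $x_1h_N\approx\sqrt{N}\,h_{N\pm1}$ gives $\|x_1\sigma\|_{H^s}\asymp N^{(s+1)/2}$. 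The ratio blows up like $N^{1/2}$. In Shubin-space language, multiplication by $x^{\alpha}\xi^{\beta}$ maps $Q^s=L^2_s\cap H^s$ into $Q^{s-|\alpha|-|\beta|}\subset H^{s-|\alpha|-|\beta|}$, not into $H^s$; so once you have pulled out the full factor $2^{-s\max(j,k)}$ and landed on the $H^s$-norm of $x^{\alpha_2}\xi^{\beta_2}\sigma$, the argument cannot be closed. A repair would require extracting only $2^{-t\max(j,k)}$ with $t\le s-|\alpha_2|-|\beta_2|$, and then checking that the remaining geometric sum still converges; this can be made to work but is not what you wrote.

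The paper's route avoids this difficulty entirely. It applies Cauchy--Schwarz with the \emph{isotropic} weight $\langle x;\xi\rangle^{-s/2}$ (which lies in $L^2(\R^{2n})$ precisely when $s>2n$, and this is where the threshold enters), obtaining
\[
\|\varphi_j(D_x)\varphi_k(D_\xi)\sigma\|_{L^1}
\le C\,\bigl\|\langle x;\xi\rangle^{s/2}\,2^{(j+k)s/4}\,
\varphi_j(D_x)\varphi_k(D_\xi)\sigma\bigr\|_{L^2}\cdot 2^{-(j+k)s/4}.
\]
Then the elementary inequality $ab\le\tfrac12(a^2+b^2)$ splits the mixed weight into $\langle x;\xi\rangle^{s}$ and $2^{(j+k)s/2}$; the first piece is controlled directly by $\|\sigma\|_{L^2_s}$ via a simple convolution estimate, the second by $\|\sigma\|_{H^s}$ via Plancherel and the support of $\varphi_j\otimes\varphi_k$. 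No commutators or polynomial-multiplier absorption are needed.
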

\begin{proof}
Let $s>2n$.
By Schwarz's inequality,
\begin{align*}
&\|\sigma\|_{B_{(n/2,n/2)}^{\bm{1},\bm{1}}}
=\sum_{j=0}^{\infty}\sum_{k=0}^{\infty}2^{(j+k)n/2}
\|\varphi_j(D_x)\varphi_k(D_{\xi})\sigma\|_{L^{1}(\R^n\times\R^n)}
\\
&=\sum_{j=0}^{\infty}\sum_{k=0}^{\infty}
2^{(j+k)(n-s/2)/2}
\|\langle x;\xi \rangle^{-s/2}\,
\langle x;\xi \rangle^{s/2}\, 2^{(j+k)s/4}\,
\varphi_j(D_x)\varphi_k(D_{\xi})\sigma\|_{L^{1}}
\\
&\le \sum_{j=0}^{\infty}\sum_{k=0}^{\infty}
2^{(j+k)(n-s/2)/2}
\|\langle x;\xi \rangle^{-s/2}\|_{L^2}
\|\langle x;\xi \rangle^{s/2}\, 2^{(j+k)s/4}\,
\varphi_j(D_x)\varphi_k(D_{\xi})\sigma\|_{L^{2}},
\end{align*}
where $\{\varphi_j\}_{j \ge 0}$ is as in \eqref{(2.11)}.
Using
$ab \le (a^2+b^2)/2$ for all $a,b \ge 0$,
we have
\begin{align*}
&\|\langle x;\xi \rangle^{s/2}\, 2^{(j+k)s/4}\,
\varphi_j(D_x)\varphi_k(D_{\xi})\sigma\|_{L^{2}}
\\
&\le \frac{1}{2}\left(\|\langle x;\xi \rangle^{s}
\varphi_j(D_x)\varphi_k(D_{\xi})\sigma\|_{L^{2}}
+\|2^{(j+k)s/2}\,
\varphi_j(D_x)\varphi_k(D_{\xi})\sigma\|_{L^{2}}\right).
\end{align*}
Hence,
\begin{equation}\label{(A.2)}
\begin{split}
\|\sigma\|_{B_{(n/2,n/2)}^{\bm{1},\bm{1}}}
&\le C\left( \sum_{j=0}^{\infty}\sum_{k=0}^{\infty}
2^{(j+k)(n-s/2)/2}\|\langle x;\xi \rangle^{s}
\varphi_j(D_x)\varphi_k(D_{\xi})\sigma\|_{L^{2}}\right)
\\
&\qquad +C\left( \sum_{j=0}^{\infty}\sum_{k=0}^{\infty}
2^{(j+k)(n-s/2)/2}\|2^{(j+k)s/2}\,
\varphi_j(D_x)\varphi_k(D_{\xi})\sigma\|_{L^{2}} \right)
\end{split}
\end{equation}
Let $\psi_j=\calF^{-1}\varphi_j$,
and we note that $\psi_j(x)=2^{jn}\psi(2^j x)$
if $j \ge 1$,
where $\psi=\calF^{-1}\varphi$ and
$\varphi \in \calS(\R^n)$ is as in \eqref{(2.11)}.
Since
\begin{align*}
&\left|\langle x;\xi \rangle^{s}
\varphi_j(D_x)\varphi_k(D_{\xi})\sigma(x,\xi)\right|
\\
&\le C\int_{\R^{2n}}|\langle x-y;\xi-\eta \rangle^{s}\,
\psi_j(x-y)\, \psi_k(\xi-\eta)|\,
|\langle y;\eta \rangle^{s}\, \sigma(y,\eta)|\, dy\, d\eta
\end{align*}
and
$\langle y;\eta \rangle^{s} \le \langle 2^j y; 2^k \eta \rangle^{s}$,
we have by Young's inequality
\begin{equation}\label{(A.3)}
\begin{split}
&\|\langle x;\xi \rangle^{s}
\varphi_j(D_x)\varphi_k(D_{\xi})\sigma\|_{L^{2}}
\\
&\le C\left(\int_{\R^{2n}}|\langle 2^j y;2^k \eta \rangle^{s}\,
\psi_j(y)\, \psi_k(\eta)|\, dy\, d\eta\right)
\|\langle x;\xi \rangle^{s}\sigma\|_{L^{2}}
\le C\|\langle x;\xi \rangle^{s}\sigma\|_{L^{2}}
\end{split}
\end{equation}
for all $j,k \ge 0$.
On the other hand,
since $2^{(j+k)s/2} \le C\langle x;\xi \rangle^{s}$ for all
$(x,\xi) \in \mathrm{supp}\, \varphi_j \times \mathrm{supp}\, \varphi_k$,
we have
\begin{equation}\label{(A.4)}
\begin{split}
\|2^{(j+k)s/2}\, \varphi_j(D_x)\varphi_k(D_{\xi})\sigma\|_{L^{2}}
&=(2\pi)^{-n}\|2^{(j+k)s/2}\, (\varphi_j\otimes\varphi_k)
\widehat{\sigma}\|_{L^{2}}
\\
&\le C\|\langle x;\xi \rangle^{s}\, \widehat{\sigma}\|_{L^{2}}
\end{split}
\end{equation}
for all $j,k \ge 0$.
Combining \eqref{(A.2)}, \eqref{(A.3)} and \eqref{(A.4)},
we obtain (1).
\par
We next consider (2).
Assume that
\begin{equation}\label{(A.5)}
B_{(n/2,n/2)}^{\bm{1},\bm{1}}(\R^n \times \R^n)
\hookrightarrow L_s^2(\R^{2n})\cap H^s(\R^{2n}),
\end{equation}
where $s>n$.
We note that
\begin{equation}\label{(A.6)}
B_{s_1+s_2}^{1,1}(\R^{2n}) \hookrightarrow
B_{(s_1,s_2)}^{\bm{1},\bm{1}}(\R^n \times \R^n)
\end{equation}
if $s_1,s_2>0$
(see \cite[Theorem 1.3.9]{Sugimoto}).
In fact, since
$\mathrm{supp}\, \Phi_0 \subset \{(x,\xi) : (|x|^2+|\xi|^2)^{1/2} \le 2\}
\subset \{(x,\xi) : |x| \le 2, \ |\xi| \le 2\}$
and $\mathrm{supp}\, \Phi_j \subset
\{(x,\xi) : 2^{j-1} \le (|x|^2+|\xi|^2)^{1/2} \le 2^{j+1}\}
\subset \{(x,\xi) : |x| \le 2^{j+1}, \ |\xi| \le 2^{j+1}\}$,
where $\Phi_0, \Phi_j \in \calS(\R^{2n})$
are as in \eqref{(2.11)} with $2n$ instead of $n$,
we have
\begin{align*}
\|\sigma\|_{B_{(s_1,s_2)}^{\bm{1},\bm{1}}}
&=\sum_{k=0}^{\infty}\sum_{\ell=0}^{\infty}2^{ks_1+\ell s_2}\,
\|\varphi_k(D_x)\varphi_\ell(D_{\xi})\sigma\|_{L^{1}(\R^n\times\R^n)}
\\
&\le \sum_{k=0}^{\infty}\sum_{\ell=0}^{\infty}
\sum_{j=0}^{\infty}2^{ks_1+\ell s_2}\,
\|\varphi_k(D_x)\varphi_\ell(D_{\xi})\Phi_j(D_{x,\xi})
\sigma\|_{L^{1}(\R^n\times\R^n)}
\\
&=\sum_{j=0}^{\infty}
\sum_{k=0}^{j+1}\sum_{\ell=0}^{j+1}
2^{ks_1+\ell s_2}\,
\|\varphi_k(D_x)\varphi_\ell(D_{\xi})(\Phi_j(D_{x,\xi})\sigma)
\|_{L^{1}(\R^n\times\R^n)}
\\
&\le C\sum_{j=0}^{\infty}
\|\Phi_j(D_{x,\xi})\sigma\|_{L^{1}(\R^n\times\R^n)}
\left(\sum_{k=0}^{j+1}2^{ks_1}\right)
\left(\sum_{\ell=0}^{j+1}2^{\ell s_2}\right)
\\
&\le C\sum_{j=0}^{\infty}2^{j(s_1+s_2)}
\|\Phi_j(D_{x,\xi})\sigma\|_{L^{1}(\R^n\times\R^n)}
=C\|\sigma\|_{B_{s_1+s_2}^{1,1}}.
\end{align*}
Then, it follows from \eqref{(A.5)} and \eqref{(A.6)} that
$B_n^{1,1}(\R^{2n})
\hookrightarrow L_s^2(\R^{2n})\cap H^s(\R^{2n})$.
However,
this contradicts the fact that
$B_{2n}^{1,1}(\R^{2n})
\not\hookrightarrow L_s^2(\R^{2n})\cap H^s(\R^{2n})$
(see the proof of Proposition \ref{A.2}).
\end{proof}

\section{Proofs of Lemmas \ref{2.1} and {4.1}}
\noindent
{\it Proof of Lemma \ref{2.1}.}
Assume that $(Q+B(0,R))\cap Q' \neq \emptyset$,
where $Q,Q' \in \calQ$.
\par
We consider the first part.
Let $\xi_{Q,Q'} \in (Q+B(0,R))\cap Q'$.
Since $\xi_{Q,Q'}=\widetilde{\xi_Q}+\xi$
for some $\widetilde{\xi_Q} \in Q$ and $\xi \in B(0,R)$,
we see that
$\langle \xi_{Q,Q'} \rangle \asymp \langle \widetilde{\xi_{Q}} \rangle$.
Hence,
by \eqref{(2.9)},
$\langle \xi_{Q} \rangle
\asymp \langle \widetilde{\xi_{Q}} \rangle
\asymp \langle \xi_{Q,Q'} \rangle$,
where $\xi_Q \in Q$.
Similarly,
$\langle \xi_{Q'} \rangle \asymp \langle \xi_{Q,Q'} \rangle$,
where $\xi_{Q'} \in Q'$.
\par
We next consider the second part.
It follows from the first part that
$|Q| \asymp \langle \xi_Q \rangle^{\alpha n}
\asymp \langle \xi_{Q'} \rangle^{\alpha n} \asymp |Q'|$,
and consequently
\begin{equation}\label{(B.1)}
|Q| \asymp |Q'|
\qquad \text{if} \quad (Q+B(0,R))\cap Q' \neq \emptyset.
\end{equation}
Let $B(c_Q, r_Q/2) \subset Q \subset B(d_Q,2R_Q)$
and $B(c_{Q'}, r_{Q'}/2) \subset Q' \subset B(d_{Q'},2R_{Q'})$,
where $Q,Q' \in \calQ$
(see \eqref{(2.5)}).
By \eqref{(2.7)} and \eqref{(B.1)},
we see that
$R_Q \asymp R_{Q'}$.
Then,
by \eqref{(2.8)},
\begin{align*}
\emptyset
&\neq (Q+B(0,R))\cap Q'
\subset (B(d_Q,2R_Q)+B(0,R))\cap B(d_{Q'},2R_{Q'})
\\
&=B(d_Q,2R_Q+R) \cap B(d_{Q'},2R_{Q'})
\subset B(d_Q, (2+\kappa_2^{-1}R)R_Q)\cap B(d_{Q'},2R_{Q'}).
\end{align*}
Combining
$B(d_Q, (2+\kappa_2^{-1}R)R_Q)\cap B(d_{Q'},2R_{Q'}) \neq \emptyset$
and $R_Q \asymp R_{Q'}$,
we obtain that
$B(d_{Q'},2R_{Q'}) \subset B(d_Q, \kappa_3 R_Q)$
for some constant $\kappa_3 \ge 2$ independent of $Q,Q'$.
Hence,
since $c_Q \in B(d_{Q},\kappa_3 R_{Q})$
and $r_Q \asymp R_Q$,
if $(Q+B(0,R))\cap Q' \neq \emptyset$ then
\begin{equation}\label{(B.2)}
Q' \subset B(d_{Q'},2R_{Q'}) \subset
B(d_{Q},\kappa_3 R_{Q}) \subset B(c_Q, \kappa_4 r_Q),
\end{equation}
where $\kappa_4$ is independent of $Q,Q' \in \calQ$.
Let $\calQ_i$, $i=1,\dots,n_0$, be subsets of $\calQ$ such that
$\calQ=\cup_{i=1}^{n_0}\calQ_i$
and the elements of $\calQ_i$
are pairwise disjoint (see \cite[Lemma B.1]{B-N}).
Set $A_{Q}=\{Q' \in \calQ: (Q+B(0,R))\cap Q' \neq \emptyset\}$.
By \eqref{(B.2)},
we have
\[
\sum_{Q' \in A_Q \cap \calQ_i}|Q'| \le |B(c_Q, \kappa_4 r_Q)|
=(2\kappa_4)^n|B(c_Q,r_Q/2)|\le (2\kappa_4)^n |Q|
\]
for all $1\le i \le n_0$.
Therefore, by \eqref{(B.1)},
we see that
\[
(\sharp A_Q)|Q|
\le \sum_{i=1}^{n_0}\sum_{Q' \in A_Q \cap \calQ_i}(\kappa_5|Q'|)
\le\kappa_5 \sum_{i=1}^{n_0}(2\kappa_4)^n |Q|
=n_0(2\kappa_4)^n \kappa_5 |Q|,
\]
that is,
$\sharp A_Q \le n_0(2\kappa_4)^n \kappa_5$.
The proof is complete.

\medskip
\noindent
{\it Proof of Lemma \ref{4.1}.}
Let $\varphi \in \calS(\R^n)$ be such that
$\varphi(0)=1$,
$\int_{\R^n}\varphi(x)\, dx=1$
and $\mathrm{supp}\, \varphi \subset \{x \in \R^n : |x| \le 1\}$.
If we set
$a_{\epsilon}(x)=\varphi(\epsilon x)(\varphi_{\epsilon}*a)(x)$, then
$\{a_{\epsilon}\}_{0<\epsilon<\epsilon(a)} \subset \calS(\R^n)$
satisfies (1) and (2),
where $\varphi_{\epsilon}(x)=\epsilon^{-n}\varphi(x/\epsilon)$
and $\epsilon(a)$ will be chosen in the below.
\par
We first consider (2).
Since $|a(x)-a(y)| \le \|\nabla a\|_{L^\infty}|x-y|$
for all $x,y \in \R^n$,
we see that
\begin{align*}
&|\partial_i(a_{\epsilon}(x))|
\le \epsilon|(\partial_i\varphi)(\epsilon x)\,
\varphi_{\epsilon}*a(x)|+|\varphi(\epsilon x)\,
\varphi_{\epsilon}*(\partial_ia)(x)|
\\
&\le \epsilon
|(\partial_i\varphi)(\epsilon x)\, (\varphi_{\epsilon}*a(x)-a(0))|
+\epsilon|(\partial_i\varphi)(\epsilon x)\, a(0)|
+\|\varphi\|_{L^1}\|\varphi\|_{L^\infty}\|\nabla a\|_{L^\infty}
\\
&\le \epsilon|(\nabla \varphi)(\epsilon x)|
\int_{\R^n}\|\nabla a\|_{L^\infty}
(1+|x|)(1+\epsilon|y|)|\varphi(y)|\, dy
\\
&\qquad +\epsilon|a(0)|\|\nabla \varphi\|_{L^\infty}
+\|\varphi\|_{L^1}\|\varphi\|_{L^\infty}\|\nabla a\|_{L^\infty}
\\
&\le C_{\varphi}^1C_{\varphi}^2\|\nabla a\|_{L^{\infty}}
+\epsilon|a(0)|\|\nabla \varphi\|_{L^\infty}
+\|\varphi\|_{L^1}\|\varphi\|_{L^\infty}\|\nabla a\|_{L^\infty}
\end{align*}
for all $0<\epsilon<1$,
where $C_{\varphi}^1=\sup_{x \in \R^n}(1+|x|)|\nabla \varphi(x)|$
and $C_{\varphi}^2=\int_{\R^n}(1+|y|)|\varphi(y)|\, dy$.
Hence, we obtain (2) with
$\epsilon(a)=\min\{\|\nabla a\|_{L^\infty}/|a(0)|,1\}$
if $a(0) \neq 0$,
and $\epsilon(a)=1$ if $a(0)=0$.
\par
We next consider (1).
Since $a$ is continuous
and $|a(x)| \le C(1+|x|)$ for all $x \in \R^n$,
we see that $\lim_{\epsilon \to 0}a_{\epsilon}(x)=a(x)$
for all $x \in \R^n$, and
$|a_{\epsilon}(x)| \le C\|\varphi\|_{L^{\infty}}C_{\varphi}^2(1+|x|)$
for all $0<\epsilon<\epsilon(a)$ and $x \in \R^n$.
Hence,
by the Lebesgue dominated convergence theorem,
we have that
$\lim_{\epsilon \to 0}\langle a_{\epsilon}Tf,g \rangle
=\langle aTf,g \rangle$ for all $f,g \in \calS(\R^n)$,
and $a_{\epsilon}f \to af$ in $L^2(\R^n)$ as $\epsilon \to 0$
for all $f \in \calS(\R^n)$,
and consequently
$T(a_{\epsilon}f) \to T(af)$ in $L^2(\R^n)$ as $\epsilon \to 0$
for all $f \in \calS(\R^n)$.
The proof is complete.


\end{document}